\documentclass{article}
\usepackage{tikz}
\usepackage{amsmath,amsfonts,amssymb,amsthm}
\usepackage{graphicx}
\usepackage{kotex}
\usepackage{color}
\usepackage{tikz}
\usetikzlibrary{math}

\newcommand{\bR}{\mathbb{R}}

\newcommand{\mr}[1]{\mathrm{#1}}
\newcommand{\mb}[1]{\mathbf{#1}}

\theoremstyle{definition}
\newtheorem{defn}{Definition}[section]
\newtheorem{lem}[defn]{Lemma}
\newtheorem{prop}[defn]{Proposition}
\newtheorem{thm}[defn]{Theorem}
\newtheorem*{thm*}{Theorem}
\newtheorem{cor}[defn]{Corollary}
\theoremstyle{remark}
\newtheorem{ex}[defn]{Example}

\numberwithin{equation}{section}

\title{Convergence of magnitude of finite positive definite metric spaces}
\author{Byungchang So\\ \small
Department of Mathematical Sciences, \\ \small
Seoul National University \\ \small %
}
\date{}
\begin{document}
\maketitle

\begin{abstract}
The magnitude of metric spaces does not appear to possess a simple, convenient continuity property, and previous studies have presented affirmative results under additional constraints or weaker notions, as well as counterexamples. 
In this vein, we discuss the continuity of magnitude of finite positive definite metric spaces with respect to the Gromov-Hausdorff distance, but with a restriction of the domain based on a canonical partition of a sufficiently small neighborhood of a finite metric space. 
As a result, the main theorem of this article explains a condition on the cardinality of metric spaces that determines the continuity of magnitude. 
This study takes advantage of the geometric interpretation of magnitude as the circumradius of the corresponding finite Euclidean subset. 
Such a transformation is especially useful for constructing counterexamples, as we can depend on Euclidean geometric intuition.
\end{abstract}

\section{Introduction}

The magnitude \cite{Leinster2013} of a compact metric space is a real number representing a certain notion of size. 
Being related to other concepts like cardinality, dimension \cite{Meckes2015}, volume \cite{BarceloCarbery2018}, and so on, magnitude also has peculiarities represented by the phrase ``the effective number of points.'' 
Distinctive properties of magnitude also appear, e.g., in its dependency upon scaling, as propositions in \cite{Meckes2015,GimperleinGoffeng2021}, examples in \cite{LeinsterWillerton2013}, and applications to data analysis \cite{Bunch+2020,Limbeck+2024}. 
Among several directions for understanding magnitude is the study of its continuity.

As \cite{Leinster2013} shows, magnitude is not continuous in general with respect to the Gromov-Hausdorff distance. 
\cite{Meckes2013} provided partial affirmative answers, namely that magnitude is lower semicontinuous when restricted to positive definite metric spaces and that magnitude is continuous when restricted to positively weighted metric spaces. 
Even under some relatively strong restrictions, the continuity is not regained immediately: 
for subsets of Euclidean spaces and with respect to Hausdorff distance in $\bR^d$, magnitude is continuous when restricted to convex subsets \cite{LeinsterMeckes2016}, but not in general \cite{Gimperlein+2022}. 
\cite{Katsumasa+2025} shows that magnitude is nowhere continuous among finite metric spaces and that a slightly weaker notion, ``generic'' continuity, holds. 
As we do not have one convenient continuity theorem, it is natural to refine the discussion and try various additional assumptions, weaker notions, and so on. 

In this vein, this article examines the continuity of magnitude of finite positive definite metric spaces with respect to the Gromov-Hausdorff distance, but with a specific restriction of domain of magnitude. 
More precisely, we can observe that any ball in Gromov-Hausdorff distance with center $X$ and sufficiently small radius can be partitioned in a canonical way. 
Roughly speaking, we classify any other metric space $X'$ close enough to $X$ into countably many types, according to the count of elements in $X'$ that can be considered close to $x$ for each $x \in X$. 
In a later part of this article, we precisely define the term ``clustered in type $r$,'' and the concept turns out critical in the convergence of magnitude as our main theorem states:
\begin{thm} \label{thm:main}
Let $X$ and $X_n$ $(n=1,2,\cdots)$ be finite positive definite metric spaces, $r = \langle r_1,r_2,\cdots,r_m \rangle$ a finite monotone decreasing sequence of natural numbers, $\|r \|_1 := r_1 + r_2 + \cdots$ the sum of $r$, and $k$ a natural number. 
Then the statement
\begin{quote}
If $\lim_{n \rightarrow \infty} X_n = X$ in Gromov-Hausdorff distance, each $X_n$ $(n=1,2,\cdots)$ is clustered in type $r$, and $\#X = k$, then $\lim_{n\rightarrow \infty} |X_n| = |X|$.
\end{quote}
is true if and only if $\|r\|_1 \leq 2$, $k = 1,2,\cdots$ or $\| r\|_1=3$, $k=1$.
\end{thm}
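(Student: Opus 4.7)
The plan is to establish the two directions of the biconditional separately: sufficiency for the listed combinations of $(r,k)$, and necessity (construction of counterexamples) for every other combination. For sufficiency, I would handle each allowed case by direct computation. When $\|r\|_1 \leq 2$, the hypotheses force $X_n$ to have at most two points, so the magnitude reduces to an explicit formula in a single distance parameter; the analysis splits according to whether the two points form a single cluster collapsing to one point of $X$, or are two singletons approaching two distinct points of $X$, and in both subcases the magnitude is continuous in the relevant distances near the limit. The remaining allowed case, $\|r\|_1 = 3$ with $k = 1$, reduces to showing that a three-point positive definite metric space whose diameter tends to $0$ has magnitude tending to $1$; I would prove this by writing the similarity matrix $Z_n$ as $J + \epsilon_n A_n$ with $J$ the all-ones matrix and extracting the leading-order ratio of $\mathbf{1}^{\top} \operatorname{adj}(Z_n) \mathbf{1}$ to $\det Z_n$, both of which vanish to first order in $\epsilon_n$ but whose ratio is pinned to $1$.

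For necessity, the strategy is to construct, for each disallowed $(r,k)$, two sequences $X_n$ and $X_n'$ that both converge to a common $X$ in Gromov--Hausdorff distance and are both clustered in type $r$, but whose magnitudes have different limits. The similarity embedding interpretation highlighted in the abstract is the key device: the magnitude of $X$ is determined by the position of the affine hull of the embedded image $\phi(X)$ relative to the origin of the ambient Hilbert space. When a group of points in $X_n$ coalesces, $\phi(X_n)$ degenerates to a lower-dimensional set, and the limiting affine hull depends on the \emph{direction} of coalescence rather than merely on the limits of the pairwise distances. Choosing two coalescence directions that yield different limiting affine hulls then produces two different magnitude limits and breaks convergence. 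For $\|r\|_1 \geq 4$ I expect either an analogue of this construction with more coalescing points, or a reduction to a lower-$\|r\|_1$ counterexample by padding $X$ with isolated points whose singleton clusters do not affect the asymptotic behavior.

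The main obstacle is the borderline case, in particular $\|r\|_1 = 3$ with $k \geq 2$. A naive first-order Taylor expansion around the singular limiting Gram matrix already exhibits cancellations that appear to pin the ratio $\mathbf{1}^{\top} Z_n^{-1} \mathbf{1}$ to $|X|$ at leading order, so the counterexample must exploit something the first-order expansion does not see: a directional or higher-order feature of the similarity embedding that survives the degeneration. Producing a sequence that remains positive definite for all sufficiently large $n$, whose Gromov--Hausdorff limit is genuinely $X$, and whose magnitude has a limit distinct from $|X|$ --- all while satisfying the clustered-in-type-$r$ condition --- is the crux of the argument. Once this borderline is settled, the counterexamples for $\|r\|_1 \geq 4$ should follow either by direct adaptation or by embedding the borderline example into a larger cluster structure, and the full iff is then obtained by combining the sufficiency and necessity clauses.
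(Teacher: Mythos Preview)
Your sufficiency argument rests on a misreading of the cardinalities involved. A space clustered in type $r = \langle r_1,\ldots,r_m\rangle$ has $m$ clusters of sizes $r_i+1$, hence $\#X_n = m + \|r\|_1$; and when $X_n \to X$ with $\#X = k$, the number of clusters is $m = k$. Thus $\|r\|_1 \leq 2$ does \emph{not} force $\#X_n \leq 2$: it gives $\#X_n = k + \|r\|_1$ with $k$ arbitrary. For instance $r = \langle 1,1,0,\ldots,0\rangle$ with $k$ entries yields $(k+2)$-point spaces $X_n$ converging to a $k$-point $X$. Your ``explicit formula in a single distance parameter'' simply does not apply, and the real content of this case is controlling the circumcenter of a $(k+2)$-point similarity embedding as two pairs of points coalesce. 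The paper does this in Proposition~\ref{prop:R-conv+2}, and the key input is Lemma~\ref{lem:non-flat}: the tri-similarity inequality forces a coalescing direction $y'-y$ to become asymptotically orthogonal to every other difference vector, which is exactly what keeps the circumcenter from escaping (compare the non-similarity-embedding counterexample \eqref{eq:simple-counterexample}).

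The same miscount hits the exceptional case: $\|r\|_1 = 3$, $k=1$ means $r = \langle 3\rangle$, so $X_n$ has \emph{four} points, not three. Your perturbation sketch $Z_n = J + \varepsilon_n A_n$ cannot be right as stated, because Example~\ref{ex:counter-(4)} exhibits \emph{five}-point FPDMSs with diameter $\to 0$ and magnitude bounded away from $1$; any argument that only uses ``leading-order ratio pinned to $1$'' would apply equally well there. What distinguishes four points from five is genuinely delicate, and the paper settles it by a separate geometric fact (Proposition~\ref{prop:4pts-radius}): a four-point set all of whose triangles are acute has circumradius at most twice its diameter. Your necessity outline is closer in spirit to the paper's, which builds four explicit seed families (Examples~\ref{ex:counter-(4)}--\ref{ex:counter-(1,1,1)}) and then amplifies them via Lemmas~\ref{lem:add-1-pt}--\ref{lem:type-ext}; but the sufficiency side needs to be rethought from scratch.
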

And we immediately obtain the following corollary, which explains how the cardinality matter for the convergence of magnitude:
\begin{cor}
Let $X$ and $X_n$ $(n=1,2,\cdots)$ be finite positive definite metric spaces. 
Then the statement
\begin{equation*}
\lim_{n \rightarrow \infty} X_n = X,\ \#X_n \leq k \quad \Rightarrow \quad \lim_{n \rightarrow \infty} |X_n| = |X|
\end{equation*}
is true if and only if $k\leq \#X+2$ or $\#X=1, k=4$.
\end{cor}

Meanwhile, the restriction to finite positive definite metric spaces enables the geometric approach (this was proposed by \cite{AsaoGomi2025,Devriendt2025} while the previous version of the current article was being written), transforming metric spaces into finite Euclidean subsets and associating magnitudes to circumradii. 
The transform is especially useful in constructing counterexamples because we can depend on Euclidean geometry instead of abstractness of metric spaces.

The remaining part of this article is organized as follows. 
In Section \ref{sec:mag-geom}, we review definitions of relevant concepts and the geometric approach recently suggested. 
Section \ref{sec:type-wise} contains the main result of this article, the refined observation on the continuity of magnitude. 
All the routine computations in Section \ref{sec:type-wise} are deferred to Appendix \ref{sec:computation}.

\subsection{List of Symbols}
Below is a summary of the notation used throughout this article.
\begin{itemize}
\item $\#X$ : the cardinality of a finite set $X$ 
\item $|X|$ : the magnitude of a metric space $X$
\item $d_{GH}(X,X')$ : the Gromov-Hausdorff distance between two metric spaces $X$ and $X'$
\item $d_{H}(Y,Y')$ : the Hausdorff distance between two subsets $Y$ and $Y'$ of an ambient space
\item $\| y \|$ : the Euclidean norm of $y$ in a Euclidean space
\item $\rho_Y$: the circumradius of a Euclidean subset $Y$ in general position
\item $\mathrm{K}_Y$: the circumcenter of a Euclidean subset $Y$ in general position
\end{itemize}

\section{Geometric Interpretation of Magnitude} \label{sec:mag-geom}

We briefly review the magnitude of metric spaces in a form tailored to our purpose, referring readers to \cite{LeinsterMeckes2016} for general properties and examples.
Let $X=\{x_1,\cdots,x_k \}$ be a finite metric space with distance function $d:X\times X \rightarrow \bR_{\geq 0}$. 
The \textbf{zeta matrix} (or \textbf{similarity matrix}) of $X$ is $\zeta_X = (e^{-d(x_i,x_j)})_{1 \leq i,j \leq k}$. 
The metric space $X$ is \textbf{positive definite} \cite{Meckes2013} if $\zeta_X$ is a positive definite matrix. 
We abbreviate ``finite positive definite metric space'' as FPDMS. 
The \textbf{magnitude} $|X|$ of an FPDMS $X$ is the sum of entries of $\zeta_X^{-1}$, i.e.,
\begin{equation*} \label{eq:magn-linalg}
|X| = \frac{\text{cofactor-sum} (\zeta_X)}{\det \zeta_X} = \frac{\sum_{i,j=1}^k (-1)^{i+j} \det (\zeta_X^{(i,j)})}{\det \zeta_X},
\end{equation*}
where we denote by $\zeta_X^{(i,j)}$ the matrix obtained by removing the $i-$th row and the $j-$th column of a matrix $\zeta_X$. 

The magnitude of an FPDMS can be interpreted geometrically (see \cite{AsaoGomi2025,Devriendt2025} also) as follows. 
Given an FPDMS $X = \{x_1 ,\cdots, x_k \}$, due to positive definiteness, the zeta matrix $\zeta_X$ is equal to the Gram matrix of some subset $Y = \{y_1,\cdots,y_k\} \subset \bR^{k}$:
\begin{equation} \label{eq:dot-vs-sim}
y_i \cdot y_j = e^{-d(x_i,x_j)} \qquad (i,j=1,2,\cdots,k ).
\end{equation}
From volume formulas
\begin{align*} 
\det \zeta_X 
&=  \big[ \mathrm{Vol} (y_1,y_2,\cdots ,y_k)\big ] ^2  \\
\text{cofactor-sum}(\zeta_X) 
&= \big[\mathrm{Vol} (y_2 -y_1, y_3 -y_1,\cdots, y_k -y_1) \big]^2
\end{align*}
in elementary geometric linear algebra, where $\mathrm{Vol}(\cdots )$ refers to the signed volume of the parallelopiped generated by finitely many given vectors, we immediately have
\begin{equation} \label{eq:mag-as-vol}
|X| = \left[ \frac{\mathrm{Vol} (y_2 -y_1, y_3 -y_1,\cdots, y_k -y_1)}{\mathrm{Vol} (y_1,y_2,\cdots ,y_k)} \right]^2 .
\end{equation}

Let $h$ be the distance from the origin to the affine subspace $\mathrm{aff}\ Y$ spanned by $Y$. 
By calculating the volume of simplex $\{\sum_{i=1}^k t_i y_i \in \bR^k: t_i \geq 0,~ \sum_{i=1}^k t_i = 1 \}$ in two different ways, we obtain
\begin{equation} \label{eq:volume-2ways}
\frac{\mathrm{Vol} (y_1,y_2,\cdots ,y_k)}{k!}  = \frac{1}{k} \times h \times \frac{\mathrm{Vol} (y_2 -y_1, y_3 -y_1,\cdots, y_k -y_1)}{(k-1)!},
\end{equation}
where the right-hand side is obtained by regarding $\mathrm{aff}\ Y$ as base and $h$ as height. 
Meanwhile, $Y$ is contained in the unit sphere $S^{k-1}=\{y\in \bR^{k}: \|y\| =1\}$, and $S^{k-1} \cap \mathrm{aff}\ Y$ is equal to the circumsphere of $Y$. 
This implies the equality
\begin{equation} \label{eq:h-vs-radius}
h = \sqrt{1 - \rho_Y ^2}.
\end{equation}
Combining equations \eqref{eq:mag-as-vol}, \eqref{eq:volume-2ways}, and \eqref{eq:h-vs-radius} we conclude $|X| = \frac{1}{1-\rho_Y^2}$.
Following \cite{Devriendt2025}, we refer to $Y$ as a \textbf{similarity embedding} of $X$, or just a similarity embedding without explicitly mentioning the original metric space $X$ (See Figure \ref{fig:embed} for an illustration). 

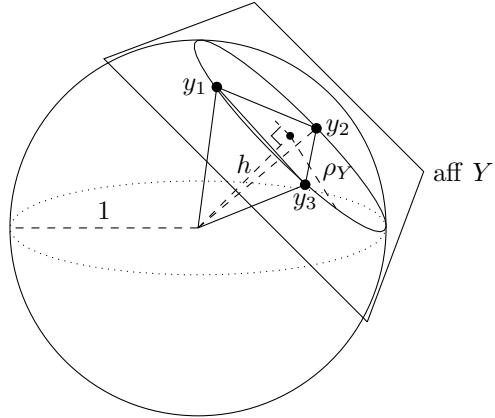
\begin{figure}
\centering
\begin{tikzpicture}
\tikzmath{\radius = 2.5; \radiusb = 0.25*\radius;
\yax =\radius*0.1; \yay = \radius*0.75;
\ybx = \radius*0.63; \yby = \radius*0.53;
\ycx = \radius*0.57; \ycy = \radius*0.23;
\kpos = 0.49; \krada = \radius*0.71; \kradb = \radius*0.13;
}
\draw (\radius,0) arc(0:360:\radius);
\draw[dashed] (0,0) -- (-\radius,0);
\draw (-0.5*\radius,0) node[anchor=south]{$1$};
\draw[dotted,domain=0:360,samples=40] plot ({\radius*cos(\x)},{\radiusb*sin(\x)});
\draw (0,0)--(\yax, \yay) node[fill,circle,minimum size=4pt,inner sep=0pt]{};
\draw[dashed] (0,0)--(\ybx, \yby) node[fill,circle,minimum size=4pt,inner sep=0pt]{};
\draw (0,0)--(\ycx, \ycy) node[fill,circle,minimum size=4pt,inner sep=0pt]{};
\draw (\yax,\yay) node[anchor=east]{$y_1$}
-- (\ybx,\yby) node[anchor=west]{$y_2$}
-- (\ycx,\ycy) node[anchor=north]{$y_3$}
-- (\yax,\yay);
\draw[dashed] (0,0) -- (\radius*\kpos,\radius*\kpos) node[fill,circle,minimum size=3pt,inner sep=0pt]{};
\draw (0.5*\radius*\kpos,0.5*\radius*\kpos) node[anchor=south]{$h$};
\draw ({\radius*(\kpos-0.05)},{\radius*(\kpos-0.05)}) 
-- ({\radius*(\kpos-0.10)},{\radius*(\kpos)}) 
-- ({\radius*(\kpos-0.05)},{\radius*(\kpos+0.05)});
\draw[dashed] (\radius*\kpos,\radius*\kpos) 
-- ({\radius*(\kpos-0.08)},{\radius*(\kpos+0.08)});
\draw[domain=0:360,samples=40] plot ({\radius*\kpos+\krada*cos(\x)/1.414+\kradb*sin(\x)/1.414},{\radius*\kpos-\krada*cos(\x)/1.414+\kradb*sin(\x)/1.414});
\draw[dashed] (\radius*\kpos,\radius*\kpos)
-- ({\radius*\kpos+\krada*cos(309)/1.414+\kradb*sin(309)/1.414},{\radius*\kpos-\krada*cos(309)/1.414+\kradb*sin(309)/1.414});
\draw ({\radius*\kpos+0.5*\krada*cos(309)/1.414+0.5*\kradb*sin(309)/1.414},{\radius*\kpos-0.5*\krada*cos(309)/1.414+0.5*\kradb*sin(309)/1.414}) node[anchor=west]{$\rho_Y$};

\draw (\radius*0.9,{\radius*(-0.5)}) 
-- (\radius*1.2, \radius*0.3) node[anchor=west]{$\mathrm{aff\ } Y$}
-- (\radius*0.3, \radius*1.2)
-- ({\radius*(-0.5)},\radius*0.9)
-- (\radius*0.9,{\radius*(-0.5)});
\end{tikzpicture}
\caption{Similarity embedding $Y=\{y_1,y_2,y_3\}$ of a $3-$point positive definite metric space.}
\label{fig:embed}
\end{figure}

There exist certain restrictions on similarity embeddings $Y$ of $X$.
First, $| \mathrm{Vol}(y_1, y_2, \cdots,y_k) |= \sqrt{\det \zeta_X} > 0 $ gives $h>0$, which in turn gives $\rho_Y =\sqrt{1-h^2} < 1$. 
Next, the relation 
\begin{equation} \label{eq:dist-x-y}
d(x_i,x_j) = \log \left( 1- \frac{1}{2}\|y_i -y_j \|^2 \right)^{-1}    
\end{equation}
following from \eqref{eq:dot-vs-sim} translates the triangle inequality of $d$ to
\begin{equation*}
\log \left( 1 - \frac{1}{2} \|y_i -y_j \|^2 \right)^{-1} + \log \left( 1 - \frac{1}{2} \|y_j -y_k \| \right)^{-1} \geq \log \left( 1 - \frac{1}{2} \|y_i -y_k \|^2 \right)^{-1}.
\end{equation*}
We will use an equivalent form
\begin{equation} \label{eq:strongly-acute}
\|y_i -y_j \|^2 + \|y_j -y_k \|^2 
\geq \|y_i -y_k \|^2 + \frac{1}{2} \|y_i -y_j \|^2 \|y_j -y_k \|^2 ,
\end{equation}
which we will call the \textbf{tri-similarity inequality}. 
Then, any similarity embedding $Y$ must at least satisfy $\rho_Y<1$ and the tri-similarity inequality \eqref{eq:strongly-acute} for each triple.

Conversely, let $Y = \{y_1,\cdots,y_k\} \subset \bR^{k-1}$ have a circumradius $\rho_Y<1$ and every triple in $Y$ satisfies tri-similarity inequality. 
From \eqref{eq:strongly-acute}, a $k-$point set $X'=\{x_1',\cdots, x_k'\}$ equipped with the distance function $d'(x_i',x_j') =1-\frac{1}{2} \|y_i -y_j\|^2$ becomes a metric space. 
Besides, $\rho_Y < 1$ implies the existence of a hypersphere with radius $1$ in $\bR^{k}$ containing $Y$. 
If $\rm K$ denotes the center of the sphere, then the equation
\begin{align*}
e^{-d(x_i', x_j')} = 1 - \frac{1}{2} \| y_i- y_j \|^2 = 1 - \frac{1}{2} \| \overrightarrow{\mr Ky_i} -\overrightarrow{\mr Ky_j} \|^2 = \overrightarrow{\mr Ky_i} \cdot \overrightarrow{\mr Ky_j}
\end{align*}
indicates that the zeta matrix $Z_{X'}$ of $X'$ is positive definite. 
Thus, $Y$ is a similarity embedding of FPDMS $X'$. 

The discussion above summarizes to the following.

\begin{thm}[{See \cite[Theorem 2.15]{Devriendt2025}, \cite[Theorem 1.1]{AsaoGomi2025} also}]
A subset $Y \subset \bR^d$ is a similarity embedding of an FPDMS $X$ if and only if $\rho_Y<1$ and every triple in $Y$ satisfies the tri-similarity inequality. 
And in such case we have
\begin{equation*}
|X| = \frac{1}{1-\rho_Y^2}
\end{equation*}
\end{thm}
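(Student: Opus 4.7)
The plan is to consolidate the geometric arguments developed throughout this section into a clean biconditional proof, followed by the magnitude identity. Since the preceding discussion walks through each ingredient, the task is largely one of organization.

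For the forward direction, I would start from a realization of $\zeta_X$ as the Gram matrix of a set $Y \subset \bR^d$. The diagonal relation $y_i \cdot y_i = e^{-d(x_i,x_i)} = 1$ forces $Y$ to lie on the unit sphere centered at the origin, so the circumsphere of $Y$ coincides with the intersection of that sphere with $\mathrm{aff}\ Y$. Letting $h$ denote the distance from the origin to $\mathrm{aff}\ Y$, this gives $h^2 + \rho_Y^2 = 1$, hence $\rho_Y < 1$; strictness comes from the observation that if the origin lay in $\mathrm{aff}\ Y$ then $\zeta_X$ would be singular, contradicting positive definiteness. The tri-similarity inequality then follows from the triangle inequality for $d$ through the identity $d(x_i,x_j) = -\log(1 - \tfrac12 \|y_i - y_j\|^2)$.

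For the reverse direction, given $Y$ with $\rho_Y < 1$ and every triple satisfying \eqref{eq:strongly-acute}, I would define $X' = \{x_1',\ldots,x_k'\}$ with $d'(x_i',x_j') = -\log(1 - \tfrac12 \|y_i - y_j\|^2)$. Non-negativity, symmetry, and vanishing on the diagonal are immediate, while the triangle inequality is exactly the tri-similarity inequality after unwinding the logarithms. For positive definiteness of $\zeta_{X'}$, I would use $\rho_Y < 1$ to place $Y$ on a unit sphere, passing to a one-higher-dimensional ambient space if necessary, centered at a point $\mathrm{K}$ on the normal line to $\mathrm{aff}\ Y$ through the circumcenter of $Y$ at distance $h = \sqrt{1 - \rho_Y^2}$. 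The identity $e^{-d'(x_i',x_j')} = \overrightarrow{\mathrm{K} y_i} \cdot \overrightarrow{\mathrm{K} y_j}$ then exhibits $\zeta_{X'}$ as a Gram matrix, with nonsingularity following from $h > 0$.

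For the magnitude formula, I would chain equations \eqref{eq:mag-as-vol}, \eqref{eq:volume-2ways}, and \eqref{eq:h-vs-radius}: the first writes $|X|$ as a ratio of squared parallelepiped volumes, the second relates those two volumes through the height $h$, and the third substitutes $h^2 = 1 - \rho_Y^2$. Elementary algebraic simplification then yields $|X| = 1/(1 - \rho_Y^2)$.

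The only step requiring a brief computation is the equivalence between the triangle inequality for $d$ and \eqref{eq:strongly-acute}. Setting $u = \|y_i - y_j\|^2$, $v = \|y_j - y_k\|^2$, and $w = \|y_i - y_k\|^2$, the log-inequality $\log(1 - u/2)^{-1} + \log(1 - v/2)^{-1} \geq \log(1 - w/2)^{-1}$ is equivalent to $(1 - u/2)(1 - v/2) \leq 1 - w/2$, which rearranges to \eqref{eq:strongly-acute}. This bookkeeping is the principal non-routine step; everything else is a direct assembly of the identities established above.
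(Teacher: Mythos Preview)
Your proposal is correct and follows essentially the same route as the paper's Section~\ref{sec:mag-geom}, which builds up all the ingredients and then states the theorem as a summary. Your version is slightly more explicit in two places---you justify the strict inequality $\rho_Y < 1$ via nonsingularity of the Gram matrix, and you note that positive definiteness (not just semidefiniteness) of $\zeta_{X'}$ in the converse follows from $h > 0$---but these are fillings-in of details the paper leaves implicit rather than a different argument.
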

To apply the geometric interpretation to the proof of Theorem \ref{thm:main}, we need to observe that the similarity embedding is bi-continuous in the sense of the following lemma.

\begin{lem}
Let $X$ be an FPDMS such that $\#X \leq d+1$ and $Y \subset \bR^d$ its similarity embedding. 
For $Y_1, Y_2 \subset \bR^d$, let us define $d_{H,\mathrm{rigid}}(Y_1,Y_2)$ as
\begin{equation*}
d_{H,\text{rigid}}(Y_1,Y_2) = \inf \big\{ d_H \big(Y_1, T(Y_2) \big): T \text{ is a rigid motion in } \bR^d \big\}.
\end{equation*}
\begin{enumerate}
\item Given $\varepsilon > 0$, there exists $\delta_d > 0$ depending on $d$ such that
\begin{equation*}
\#X' \leq d+1,\ d_{GH}(X,X') < \delta_d \quad \Rightarrow \quad d_{H,\textrm{rigid}} (Y, Y') < \varepsilon
\end{equation*}
holds for FPDMS $X'$ and its similarity embedding $Y' \subset \bR^d$.
\item Given $\varepsilon > 0$, there exists $\delta > 0$ such that 
\begin{equation*}
\#X' \leq d+1,\ d_{H,\textrm{rigid}}(Y,Y') < \delta \quad \Rightarrow \quad d_{GH} (X,X') < \varepsilon.
\end{equation*}
holds for FPDMS $X'$ and its similarity embedding $Y' \subset \bR^d$.
\end{enumerate}
\end{lem}

\begin{proof}
We use two auxiliary facts on Gromov-Hausdorff.
First, the Gromov-Hausdorff distance between two compact metric spaces $Z$ and $Z'$ can be represented \cite[Theorem 2.1]{KaltonOstrovskii1999} (see \cite{Memoli2013} also) as
\begin{equation} \label{eq:GH-distort}
\begin{aligned}
d_{GH} (Z, Z') \qquad\qquad\qquad&\\
= \frac{1}{2} \inf_{\substack{f: Z \rightarrow Z' \\ f' : Z' \rightarrow Z}} 
\max \Big\{ \sup_{z_1, z_2 \in Z} 
& \big \{ \big|d_Z(z_1, z_2) - d_{Z'} \big( f(z_1 ), f(z_2) \big) \big| \big \},   \\
\sup _{z_1', z_2' \in Z'}  
&\big\{ \big| d_Z \big(f'(z_1' ), f'(z_2' ) \big) - d_{Z'} (z_1' ,z_2' ) \big| \big\}, \\
 \sup_{z_1\in Z, z_2' \in Z'} 
&\big\{ \big| d_Z \big(z_1, f'(z_2')\big) - d_{Z'} \big(f(z_1), z_2'\big)\big| \big \}   \ \ 
\Big\}.
\end{aligned}
\end{equation}
Second, we can relate \cite[Theorem 2]{Memoli2008} $d_{GH}(Y,Y')$ and Hausdorff distance in $\bR^d$ as
\begin{equation*}
d_{GH}(Y,Y') \leq  d_{H,\text{rigid}}(Y,Y') \leq c_d  \cdot \max\{\mathrm{diam}\ Y,\mathrm{diam}\ Y' \} \cdot d_{GH}(Y,Y'),
\end{equation*}
where $c_d$ is a constant which depends only on the dimension $d$. 

Let bijections $h:X \rightarrow Y$ and $h': X' \rightarrow Y'$ be such that
\begin{equation*}
d_X (x_1, x_2) = \varphi \Big(d_Y \big(h(x_1), h(x_2)\big)\Big),
\qquad 
d_{X'} (x_1', x_2') = \varphi \Big(d_{Y'} \big(h'(x_1'), h'(x_2')\big)\Big),
\end{equation*}
where $\varphi(t) = -\log \left( 1 - \frac{t^2}{2} \right)$. 
Suppose that $\epsilon > 0$ is given.
\begin{enumerate}
\item By the uniform continuity of $\varphi^{-1}$ on interval $[0,\mathrm{diam\ } X + 1]$, there exists $\delta_1' > 0$ such that 
\begin{equation*}
0 \leq a ,b \leq \mathrm{diam\ }X+1,\ |a-b| < \delta_1' 
\quad \Rightarrow \quad
|\varphi^{-1}(a) - \varphi^{-1}(b) | < \frac{2\epsilon}{c_d (\mathrm{diam \ }Y + 1)}.
\end{equation*}
Let $X'$ be an FPDMS such that $d_{GH}(X,X') < \delta_2' :=  \min\{1, \frac{\delta_1'}{3}\}$. 
By \eqref{eq:GH-distort}, there exist two functions $f:X \rightarrow X'$ and $f':X'\rightarrow X$ such that
\begin{equation*}
\begin{aligned}\frac{1}{2} 
\max \Big\{ \sup_{x_1, x_2 \in X} 
& \big \{ \big|d_X(x_1, x_2) - d_{X'} \big( f(x_1 ), f(x_2) \big) \big| \big \},   \\
\sup _{x_1', x_2' \in X'}  
&\big\{ \big| d_X \big(f'(x_1' ), f'(x_2' ) \big) - d_{X'} (x_1' ,x_2' ) \big| \big\}, \\
 \sup_{x_1\in X, x_2' \in X'} 
&\big\{ \big| d_X \big(x_1, f'(x_2')\big) - d_{X'} \big(f(x_1), x_2'\big)\big| \big \}   \ \ 
\Big\} < \frac{\delta_1'}{3}.
\end{aligned}
\end{equation*}
This gives
\begin{equation*}
\begin{aligned} \frac{1}{2}
\max \Big\{ \sup_{y_1, y_2 \in Y} 
& \big \{ \big| d_Y(y_1, y_2) - d_{Y'} \big( (h'\circ f\circ h^{-1})(y_1 ), (h'\circ f\circ h^{-1})(y_2) \big) \big| \big \},   \\
\sup _{y_1', y_2' \in Y'}  
&\big\{ \big| d_Y \big((h\circ f'\circ h'^{-1})(y_1' ), (h\circ f'\circ h'^{-1})(y_2' ) \big) - d_{Y'} (y_1' ,y_2' ) \big| \big\}, \\
 \sup_{y_1\in Y, y_2' \in Y'} 
&\big\{ \big| d_Y \big(y_1, (h\circ f'\circ h'^{-1})(x_2')\big) - d_{Y'} \big((h' \circ f\circ h^{-1})(y_1), y_2'\big)\big| \big \} 
\Big\} \\
=\frac{1}{2}
\max \Big\{ \sup_{x_1, x_2 \in X} 
& \big \{ \big| \varphi^{-1}\big(d_X(x_1, x_2)\big) - \varphi^{-1}\Big( d_{X'} \big( f(x_1 ), f(x_2) \big)\Big) \big| \big \},   \\
\sup _{x_1', x_2' \in X'}  
&\big\{ \big| \varphi^{-1} \Big( d_X \big(f'(x_1' ), f'(x_2' ) \big) \Big) - \varphi^{-1} \big( d_{X'} (x_1' ,x_2' ) \big) \big| \big\}, \\
 \sup_{x_1\in X, x_2' \in X'} 
&\big\{ \big| \varphi^{-1} \Big( d_X \big(x_1, f'(x_2')\big) \Big) - \varphi^{-1} \Big( d_{X'} \big(f(x_1), x_2'\big)\big| \Big) \big \}   \ \ 
\Big\} \\
< \frac{\epsilon}{c_d (\mathrm{diam \ }Y + 1)},&
\end{aligned}
\end{equation*}
and therefore we have $d_{H,\mathrm{rigid}}(Y,Y') \leq c_d(\mathrm{diam\ } Y+1) \cdot d_{GH}(Y,Y') < \epsilon$.
\item This is proved in a similar way as above.
\end{enumerate}
\end{proof}

\begin{cor} \label{cor:X-vs-Y}
For a sequence $(X_n)$ of FPDMSs of cardinality $\leq d+1$ convergent in Gromov-Hausdorff distance, we can choose similarity embedding $Y_n$ of $X_n$ for each $n=1,2,\cdots$ such that $(Y_n)$ converges in Hausdorff distance. 
Conversely, given sequence $(Y_n)$ of similarity embeddings in $\bR^d$ convergent in Hausdorff distance, the sequence $(X_n)$ of FPDMSs converges in Gromov-Hausdorff distance.
\end{cor}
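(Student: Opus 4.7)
My plan is to deduce both directions from the preceding lemma by translating its quantitative single-pair estimates into sequential statements, keeping the target limit as a fixed reference so that the nonuniqueness of similarity embeddings (up to rigid motion) does not cause trouble.

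For the forward direction, suppose $X_n \to X$ in Gromov-Hausdorff distance. I would fix any similarity embedding $Y$ of $X$ in $\bR^d$, and for each $n$ pick some similarity embedding $\tilde{Y}_n$ of $X_n$ in $\bR^d$. Part~1 of the lemma, applied to the pair $(X, X_n)$ for each fixed $\varepsilon > 0$, produces a rigid motion $\phi$ with $d_H(Y, \phi(\tilde{Y}_n)) < \varepsilon$ as soon as $d_{GH}(X, X_n) < \delta_d(\varepsilon)$; in particular $d_{H,\mathrm{rigid}}(Y, \tilde{Y}_n) \to 0$. To commit to a single sequence, I would then choose, for each $n$, a specific near-minimizer $\phi_n$ with $d_H(Y, \phi_n(\tilde{Y}_n)) \leq d_{H,\mathrm{rigid}}(Y, \tilde{Y}_n) + 1/n$ and set $Y_n := \phi_n(\tilde{Y}_n)$, so that $d_H(Y, Y_n) \to 0$.

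For the backward direction, suppose $(Y_n)$ converges in Hausdorff distance. Part~2 of the lemma applied pairwise to $(Y_n, Y_m)$ transfers the Hausdorff-Cauchy condition on $(Y_n)$ to the Gromov-Hausdorff-Cauchy condition on $(X_n)$: given $\varepsilon > 0$, pick $\delta(\varepsilon)$ from the lemma; then for large $m, n$ we have $d_H(Y_n, Y_m) < \delta(\varepsilon)$ by the triangle inequality, hence $d_{GH}(X_n, X_m) < \varepsilon$. Completeness of Gromov-Hausdorff distance on isometry classes of compact metric spaces then provides the limit.

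I do not anticipate a substantive obstacle. The only mildly delicate point, and the reason the corollary is not entirely automatic, is that the rigid motion produced by part~1 of the lemma depends on both the pair considered and on $\varepsilon$. The forward direction therefore requires committing once per $n$ to a rigid motion $\phi_n$ before letting $\varepsilon$ shrink, which is precisely the role of the near-optimality choice above.
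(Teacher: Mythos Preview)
Your forward direction is correct and is the natural argument (the paper states the corollary without separate proof): fix a similarity embedding $Y$ of the limit $X$ and use Part~1 together with a near-optimal choice of rigid motion for each term.

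Your backward direction via Cauchy completeness has a gap. Part~2 of the lemma is stated with the pair $(X,X')$ fixed before $\delta$ is chosen, and the contrast with Part~1 (where the paper explicitly writes ``$\delta_d$ depending on $d$'') is deliberate: the passage $d_H \Rightarrow d_{GH}$ goes through $\varphi(t)=-\log(1-t^2/2)$, which is \emph{not} uniformly continuous on $[0,\sqrt{2})$. Hence writing ``$\delta(\varepsilon)$'' and applying it simultaneously to all pairs $(Y_n,Y_m)$ is not justified by the lemma as stated; indeed, if pairwise distances within the $Y_n$ drift toward $\sqrt{2}$ then the corresponding $X_n$ have diameter tending to $\infty$ and cannot be $d_{GH}$-Cauchy. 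The missing observation is that in every use the paper makes of this direction the Hausdorff limit $Y$ is itself a similarity embedding, so pairwise distances in $Y_n$ are eventually uniformly bounded away from $\sqrt{2}$. Once you record this, $\varphi$ is Lipschitz on the relevant interval and either your Cauchy argument goes through with a genuinely uniform $\delta$, or---more directly and closer to what the paper intends---you apply Part~2 to the pair $(Y_n,Y)$ with $Y$ fixed and conclude $X_n\to X$ immediately.
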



\section{Main Result} \label{sec:type-wise}

It was proved \cite{Katsumasa+2025} that magnitude is nowhere continuous in the set of (isometry classes of) finite metric spaces. 
In other words, given metric space $X$ we cannot arbitrarily lessen the difference $\big| |X|-|X'|\big|$ by controlling $d_{GH}(X,X')$ \textit{only}, for another metric space $X'$. 
As a next step, we re-examine the same problem with an additional restriction that $X'$ \textit{remains in a specific subset} of a neighbor of $X$. 
Indeed, there exists a canonical partition of any sufficiently ball in Gromov-Hausdorff distance, as explained below.

Let $X$ be a finite metric space and $\delta_0 := \min\{d(x_1,x_2): x_1,x_2 \in X,\ x_1 \neq x_2\}$. 
If $X'$ is another finite metric space such that $d_{GH}(X,X') < \varepsilon <  \frac{1}{4}\delta_0$, then there exist isometries $\phi:X \rightarrow Z$ and $\phi': X' \rightarrow Z$ into another metric space $Z$ such that $d_H(\phi(X), \phi(X')) < \frac{1}{4} \delta_0$. 
Then the projection $\pi:\phi(X') \rightarrow \phi(X)$ is well-defined by declaring $\pi(z')$ to be the closest element of $\phi(X)$ from $z'$, and we have
\begin{align*}
d_{X'} (x_1',x_2' )
&= d_Y (\phi(x_1'), \phi(x_2') ) \\
&\begin{cases}
< 2\varepsilon < \frac{1}{2} \delta_0, & \text{if } \pi (\phi(x_1') ) = \pi(\phi(x_2')) \\
> \frac{1}{2}\delta_0  , & \text{if } \pi (\phi(x_1') ) \neq \pi(\phi(x_2'))
\end{cases}
\end{align*}
Consequently, $X'$ can be partitioned into $k:=\#X$ subsets $X_1',\cdots,X_{k}'$ such that
$d_{X'} (X_i', X_j' ) > \frac{1}{2} \delta_0 $ for $i\neq j$ and $\mathrm{diam} X_i' < \frac{1}{2} \delta_0$ for each $i$. 
As a description of such ``clustering'' of $X'$, we will use the sequence obtained by rearranging the integers $\#X_1 -1, \#X_2 -1, \cdots, \#X_k -1$ to be monotonely decreasing. 
Hence the following definition:
\begin{defn}
Let $R$ be the set of all finite monotone decreasing sequence of nonnegative integers. 
A finite metric space $X$ is \textbf{clustered in type} $r = \langle r_1, r_2,\cdots, r_m\rangle \in R$ if there exists $\varepsilon > 0$ such that the relation $\{ (x_1, x_2) \in X \times X: d(x_1, x_2) < \epsilon\}$ is an equivalence relation on $X$, resulting in $m$ equivalent classes with respective cardinalities $r_1 +1 ,\cdots, r_m + 1$. 
\end{defn}
In terms of the above definition, we have just shown that every sufficiently small neighbor of $X$ can be partitioned into subsets of metric spaces clustered in different types in $R$. 
Therefore, it is natural to ask, as in Theorem \ref{thm:main}, whether $\lim_{n \rightarrow \infty} |X_n| = |X|$ if $\lim_{n \rightarrow \infty}X_n = X$ in Gromov-Hausdorff distance \textit{and} $X_n$ remains clustered in a certain type. 

Now we prepare for the proof of Theorem \ref{thm:main}. 
Because of the distance relation \eqref{eq:dist-x-y}, an FPDMS $X$ is clustered in type $r$ if and only if its similarity embedding $Y$ is clustered in type $r$. 
By applying Corollary \ref{cor:X-vs-Y}, it suffices to prove that the corresponding statement
\begin{quote}
If $(Y_n)$ is a sequence of similarity embeddings clustered in type $r$ converging in Hausdorff distance to another similarity embedding $Y$ of cardinality $k$, then $\lim_{n \rightarrow \infty} \rho_{Y_n} = \rho_Y$.
\end{quote}
is true if and only if $r$ and $k$ satisfy the same condition as in Theorem \ref{thm:main}.
We will present steps of proof for the ``if'' part and counterexamples for the ``only if'' part in the following subsections.

\subsection{Proof of the ``if'' part}
There exists a sequence $(Y_n)$ of Euclidean subsets in(not necessarily similarity embeddings) such that $\lim_{n \rightarrow \infty } Y_n = Y$ in $d_H$ but $\lim_{n\rightarrow \infty} \rho_{Y_n} = \rho_Y$. 
Indeed, if we let
\begin{equation} \label{eq:simple-counterexample}
Y_n = \left\{ (0,0),\ (1,0),\ \left( 1+\frac{1}{n} , \frac{1}{n} \right) \right\} ,\  Y = \big\{(0,0),\ (1,0)\big\} \subset \bR^2,
\end{equation}
the circumcenters are
\begin{equation*}
\mr K_{Y_n} = \left( \frac{1}{2}, \frac{1}{2}+\frac{1}{n} \right) , \quad \mr K_Y =  \left( \frac{1}{2} ,0 \right)
\end{equation*}
and hence $\lim_{n \rightarrow \infty} \rho_{Y_n} \neq \rho_Y$ (see Figure \ref{fig:triangle135}). 
Therefore, proof of the ``if'' part must depend on an additional constraint, the tri-similarity inequality \eqref{eq:strongly-acute}. 
The following lemma shows a consequence of the tri-similarity inequality.

\begin{figure}
\centering
\begin{tikzpicture}
\tikzmath{\scal = 2.5; \ncas = 8;
}
\draw (0,0) -- (\scal,0) -- ({\scal*(1+1/\ncas)},\scal*1/\ncas) -- (0,0);
\draw (\scal*0.5,{\scal*(0.5+1/\ncas)}) circle ({\scal*sqrt(0.25+0.25+1/\ncas+1/(\ncas*\ncas});

\draw[->,dotted] (\scal,0) -- ({\scal*(1+1/\ncas)},0);
\draw[->,dotted] ({\scal*(1+1/\ncas)},0) -- (\scal,0);
\draw ({\scal*(1+0.5/\ncas)},0) node[anchor=north]{$\frac{1}{n}$};

\draw[->,dotted] ({\scal*(1+1/\ncas)},0) -- ({\scal*(1+1/\ncas)},\scal*1/\ncas);
\draw[->,dotted] ({\scal*(1+1/\ncas)},\scal*1/\ncas) -- ({\scal*(1+1/\ncas)},0);
\draw  ({\scal*(1+1/\ncas)},\scal*0.5/\ncas) node[anchor=west]{$\frac{1}{n}$};

\draw[dashed] (\scal*0.5,0) circle(\scal*0.5);
\draw[fill] (\scal*0.5,{\scal*(0.5+1/\ncas)}) node[anchor=south]{$\mr K_{Y_n}$} circle (1pt);
\draw[fill] (\scal*0.5,0) node[anchor=north]{$\mr K_Y$} circle (1pt);
\end{tikzpicture}
\caption{Non-continuity of circumradii with respect to Hausdorff distance.}
\label{fig:triangle135}
\end{figure}
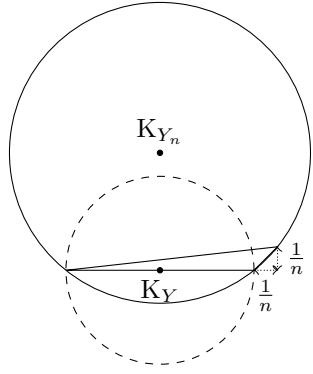

\begin{lem} \label{lem:non-flat}
If any triple of four points $y_1,y_1',y_2,y_2' \in \bR^3$ ($y_1 \neq y_1',\ y_2 \neq y_2'$) satisfies the tri-similarity inequality \eqref{eq:strongly-acute}, we have
\begin{equation*}
\left| \frac{y_1'-y_1}{\| y_1'-y_1\|} \cdot \frac{y_2'-y_2}{ \| y_2'-y_2 \| }  \right|
\\ \leq M \cdot  \min\left\{ \frac{\|y_1'-y_1\|}{\|y_2' -y_2\|}, \frac{\|y_2'-y_2\|}{\|y_1' -y_1\|} \right\} ,
\end{equation*}
where
\begin{equation*}
M = 1 - \frac{1}{2} \Big( \min \big\{ \| y_1 - y_2 \|,\ \| y_1' - y_2 \|,\ \| y_1 - y_2' \|,\ \| y_1' - y_2' \| \big\} \Big)^2
\end{equation*}
\end{lem}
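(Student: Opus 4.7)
My strategy is to express $(y_1'-y_1)\cdot(y_2'-y_2)$ purely in terms of the six pairwise distances among $y_1,y_1',y_2,y_2'$, and then bound each difference that arises by applying the tri-similarity inequality to triples sharing a common edge. To start, I would record the polarization-type identity
\begin{equation*}
2(y_1'-y_1)\cdot(y_2'-y_2) = \|y_1-y_2'\|^2 + \|y_1'-y_2\|^2 - \|y_1-y_2\|^2 - \|y_1'-y_2'\|^2,
\end{equation*}
which is valid for any four points in a Euclidean space and follows from direct expansion of each squared norm. This converts the target inequality into one about scalar distances only.

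Next, I would establish the following two-sided consequence of tri-similarity on a single triangle $\{P,Q,R\}$: applying \eqref{eq:strongly-acute} once with $Q$ and once with $R$ as the middle vertex yields
\begin{equation*}
\bigl|\|P-Q\|^2 - \|P-R\|^2\bigr| \;\leq\; \|Q-R\|^2\Bigl(1 - \tfrac{1}{2}\min\{\|P-Q\|^2,\|P-R\|^2\}\Bigr).
\end{equation*}
In words, the difference of the two sides incident to a vertex $P$ is controlled by the squared length of the opposite side, with an extra factor that strictly shrinks as both incident sides grow.

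To conclude, I would group the right-hand side of the identity as
\begin{equation*}
\bigl(\|y_1'-y_2\|^2 - \|y_1-y_2\|^2\bigr) + \bigl(\|y_1-y_2'\|^2 - \|y_1'-y_2'\|^2\bigr),
\end{equation*}
so that each bracket is the difference of the two sides of a triangle ($\{y_1,y_1',y_2\}$ and $\{y_1,y_1',y_2'\}$ respectively) incident to the vertex opposite the common edge $y_1 y_1'$. Applying the estimate of the previous paragraph to each bracket, noting that each resulting ``$\min$'' is at least $d_{\min}^2$ where $d_{\min}$ denotes the minimum of the four cross-distances, and invoking the triangle inequality for absolute values, yields $|2(y_1'-y_1)\cdot(y_2'-y_2)| \leq \|y_1'-y_1\|^2(2 - d_{\min}^2) = 2C\|y_1'-y_1\|^2$. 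Regrouping instead as $(\|y_1-y_2'\|^2 - \|y_1-y_2\|^2) + (\|y_1'-y_2\|^2 - \|y_1'-y_2'\|^2)$ exhibits the two brackets as triangles sharing the edge $y_2 y_2'$ and produces the symmetric bound $|(y_1'-y_1)\cdot(y_2'-y_2)| \leq C\|y_2'-y_2\|^2$. Dividing by $\|y_1'-y_1\|\cdot\|y_2'-y_2\|$ yields the claimed inequality.

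The main—and essentially only creative—step is this choice of grouping: among the three pairings of the four cross-terms on the right side of the identity, only two produce triangles sharing a common edge whose squared length appears in the desired bound, and these two pairings are precisely what one needs for the $\|y_1'-y_1\|^2$ and $\|y_2'-y_2\|^2$ bounds respectively. Once this is recognized, the remaining work is routine algebra, and the argument uses nothing beyond Euclideanness of the ambient space, so the restriction to $\bR^3$ plays no role.
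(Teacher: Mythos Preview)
Your proposal is correct and follows essentially the same route as the paper: both start from the polarization identity expressing $(y_1'-y_1)\cdot(y_2'-y_2)$ in terms of the four cross-distances, then apply the tri-similarity inequality to the two triangles sharing the edge $y_1y_1'$ (respectively $y_2y_2'$) to obtain the $\|y_1'-y_1\|^2$ (respectively $\|y_2'-y_2\|^2$) bound. The only cosmetic difference is that you package the two-sided estimate $|\|P-Q\|^2-\|P-R\|^2|\leq\|Q-R\|^2(1-\tfrac12\min\{\|P-Q\|^2,\|P-R\|^2\})$ as a standalone lemma and apply it twice, whereas the paper handles the upper and lower bounds on the inner product separately; the underlying four applications of \eqref{eq:strongly-acute} are identical.
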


\begin{proof}
We use the identity
\begin{equation} \label{eq:skew-inner-prod}
(y_1' -y_1) \cdot (y_2' -y_2) = \frac{1}{2} \big( \|y_2' -y_1 \| ^2  + \| y_2-y_1' \| ^2  - \| y_2 - y_1 \| ^2 - \| y_2' - y_1' \| ^2  \big).
\end{equation}
Applying tri-similarity inequalities
\begin{align*}
\|y_2' -y_1 \| ^2 
& \leq \| y_2' - y_1'\| ^2 + \| y_1' - y_1 \| ^2 - \frac{1}{2} \| y_2' -y_1' \| ^2 \| y_1' -y_1 \|^2  \\
\|y_2 - y_1' \|^2 
& \leq \| y_2 - y_1 \| ^2 + \| y_1 - y_1' \|^2 - \frac{1}{2} \| y_2 -y_1 \| ^2 \| y_1 - y_1' \| ^2 ,
\end{align*}
to equation \eqref{eq:skew-inner-prod}, we obtain
\begin{align*}
(y_1' - y_1) \cdot (y_2' - y_2) 
& \leq \| y_1' - y_1 \| ^2 \left( 1- \frac{1}{4}\|y_2'-y_1'\|^2 - \frac{1}{4} \| y_2 -y_1\|^2 \right) \\
& \leq M \cdot \| y_1' - y_1 \| ^2 .
\end{align*}
Likewise, applying tri-similarity inequalities
\begin{align*}
\|y_2' - y_1 \| ^2 
& \geq \| y_2' - y_1' \| ^2 - \| y_1' -y_1 \|^2 + \frac{1}{2} \| y_2' -y_1 \| ^2 \| y_1' -y_1 \|^2  \\
\| y_2 -y_1' \| ^2 
& \geq \| y_2 -y_1 \| ^2 - \| y_1 -y_1' \| ^2 + \frac{1}{2} \| y_2 -y_1' \| ^2 \| y_1 -y_1' \| ^2
\end{align*}
to equation \eqref{eq:skew-inner-prod}, we obtain
\begin{align*}
(y_1' - y_1) \cdot (y_2' - y_2) 
& \geq - \| y_1' -y_1\| ^2 \left( 1 - \frac{1}{4} \| y_1' -y_1 \|^2 - \frac{1}{4} \| y_2 -y_1' \|^2  \right) \\
& \geq - M \cdot \| y_1' - y_1 \| ^2
\end{align*}
Thus, we conclude 
\begin{equation*}
\big| (y_1' - y_1) \cdot (y_2' - y_2) \big| \leq M \cdot \| y_1' -y_1 \| ^2.
\end{equation*}
From a similar computation we obtain
\begin{equation*}
\big| (y_1' - y_1) \cdot (y_2' - y_2) \big| \leq M \cdot \| y_2' -y_2 \| ^2,
\end{equation*}
and the proof is complete by combining the above two results.
\end{proof}

In particular, the above lemma implies that as $\| y_1 -y_1' \|$ approaches zero, the angle between the vectors $y_1'-y_1$ and $y_2'-y_2$ approaches the right angle. 
This idea lies behind the proof of the following proposition.

\begin{prop} \label{prop:R-conv+2}
Let $Y' = \{ y_1' ,\cdots, y_{k+2}'\},\ Y = \{y_1,\cdots, y_k\} \subset \bR^{k+1}$ be similarity embeddings with circumradii $\rho_{Y'}$ and $\rho_{Y}$, respectively. 
For each $\alpha,\beta \in \{1,\cdots,k\}$, let
\begin{equation*}
h_{\alpha\beta} := \sqrt{ \|y_1' -y_1\|^2 + \cdots + \|y_k' -y_k\|^2 + \|y_{k+1}'-y_{\alpha}\|^2 + \|y_{k+2}'-y_{\beta}\|^2 }.
\end{equation*}
Then we have
\begin{equation*}
| {\rho_{Y'}} - \rho_Y | \leq C_Y h_{\alpha \beta}
\end{equation*}
for all sufficiently small $h_{\alpha\beta}$, where $C_Y$ is a constant depending on $Y$.
\end{prop}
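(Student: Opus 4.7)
The plan is a perturbation analysis for the circumcenter $p' := \mr{K}_{Y'}$ relative to $p := \mr{K}_Y$, driven by the orthogonality consequence of Lemma \ref{lem:non-flat}. After translating so that $p = 0$, each $y_j$ has norm $\rho_Y$ and $Y$ spans a $(k-1)$-dimensional linear subspace $V_Y \subset \bR^{k+1}$ whose orthogonal complement $V_Y^\perp$ has dimension $2$. Introduce the notation $\tilde y_j := y_j$ for $j \leq k$, $\tilde y_{k+1} := y_\alpha$, $\tilde y_{k+2} := y_\beta$, and $\tilde v_j := y_j' - \tilde y_j$, so that $\|\tilde v_1\|^2 + \cdots + \|\tilde v_{k+2}\|^2 = h_{\alpha\beta}^2$. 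The first step is to apply Lemma \ref{lem:non-flat} to each pair $(\tilde y_j, y_j')$ against each pair $(\tilde y_j, \tilde y_i)$ with $\tilde y_i \neq \tilde y_j$: one obtains $|\tilde v_j \cdot (\tilde y_i - \tilde y_j)| \leq C \|\tilde v_j\|^2$, and since $\{\tilde y_i - \tilde y_j : i \neq j\}$ spans $V_Y$ by general position of $Y$, this forces the $V_Y$-component of $\tilde v_j$ to have norm at most $C' \|\tilde v_j\|^2$. In short, every perturbation $\tilde v_j$ is essentially normal to $V_Y$ up to a quadratic error.

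Next, linearize the circumsphere equations by introducing the auxiliary scalar $L' := \rho_{Y'}^2 - \|p'\|^2$, so that $\|y_j' - p'\|^2 = \rho_{Y'}^2$ rewrites as $2 y_j' \cdot p' + L' = \|y_j'\|^2$ for $j = 1,\ldots,k+2$. Subtracting the baseline identity $\rho_Y^2 = \|\tilde y_j\|^2$ and writing $\delta p := p'$, $\delta L := L' - \rho_Y^2$ gives
\begin{equation*}
2\, \tilde y_j \cdot \delta p + \delta L = 2\, \tilde y_j \cdot \tilde v_j + \|\tilde v_j\|^2 - 2\, \tilde v_j \cdot \delta p,
\end{equation*}
in which Step~1 already bounds $\tilde y_j \cdot \tilde v_j = O(h_{\alpha\beta}^2)$. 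Decomposing $\delta p = \delta p_\parallel + \delta p_\perp$ along $V_Y \oplus V_Y^\perp$, the equations for $j = 1,\ldots,k$ form a $k \times k$ linear system in $(\delta p_\parallel, \delta L)$ whose coefficient matrix is the invertible circumsphere matrix of $Y$ and whose right-hand side is of size $O(h_{\alpha\beta}^2) + O(h_{\alpha\beta}) \cdot \|\delta p_\perp\|$. Subtracting the $j = \alpha$ and $j = \beta$ equations from the $j = k+1$ and $j = k+2$ equations, respectively, produces two scalar constraints that, after Step~1, reduce to
\begin{equation*}
(\tilde v_{k+1} - \tilde v_\alpha)^\perp \cdot \delta p_\perp = O(h_{\alpha\beta}^2), \qquad (\tilde v_{k+2} - \tilde v_\beta)^\perp \cdot \delta p_\perp = O(h_{\alpha\beta}^2),
\end{equation*}
where $(\cdot)^\perp$ denotes the $V_Y^\perp$-component. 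General position of $Y'$ (its $k+1$ difference vectors $y_j' - y_1'$ must span $\bR^{k+1}$) forces the two $V_Y^\perp$-vectors above to be linearly independent, so this $2 \times 2$ system inverts to give $\delta p_\perp = O(h_{\alpha\beta})$, and back-substitution into the tangential subsystem yields $\delta p_\parallel, \delta L = O(h_{\alpha\beta}^2)$.

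Therefore $\rho_{Y'}^2 - \rho_Y^2 = \delta L + \|\delta p\|^2 = O(h_{\alpha\beta}^2)$, which gives $|\rho_{Y'} - \rho_Y| \leq C_Y h_{\alpha\beta}$ (indeed with a quadratic error in $h_{\alpha\beta}$). The main obstacle is the simultaneous solve in the previous paragraph: the $k \times k$ tangential subsystem and the $2 \times 2$ normal subsystem are coupled through the cross terms $\tilde v_j \cdot \delta p$, so I would close the argument either by a Schur-complement decomposition of the full $(k+2) \times (k+2)$ linear system, or by a bootstrap in which one first assumes an \textit{a priori} $\|\delta p\| = O(h_{\alpha\beta})$, uses this to reduce the normal subsystem to size $O(h_{\alpha\beta}^2)$, solves it via a $Y$-controlled lower bound on the relevant $2 \times 2$ determinant, and then verifies the tangential part. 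The dependence of $C_Y$ on $Y$ enters precisely through this $2 \times 2$ lower bound together with the Lemma~\ref{lem:non-flat} constant, both of which are controlled by the minimum inter-point distance in $Y$.
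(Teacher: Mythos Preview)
Your approach is essentially the paper's: estimate $\mr K_{Y'}-\mr K_Y$ by splitting along $V_Y\oplus V_Y^\perp$, control the tangential part by the first $k$ equidistance equations, and determine the two-dimensional normal part $\delta p_\perp$ from a $2\times 2$ system coming from the two extra points. Your linearization via the auxiliary scalar $L'$ is different bookkeeping from the paper's direct use of perpendicularity to $\mathrm{aff}\,Y_k'$, but it leads to the same $2\times 2$ system, whose coefficient vectors $(\tilde v_{k+1}-\tilde v_\alpha)^\perp=(y_{k+1}'-y_\alpha')^\perp$ and $(\tilde v_{k+2}-\tilde v_\beta)^\perp=(y_{k+2}'-y_\beta')^\perp$ are exactly the paper's $\mb u$ and $\mb v$ (unnormalized, projected).

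The genuine gap is your justification for inverting that $2\times 2$ system. You write that general position of $Y'$ forces the two $V_Y^\perp$-vectors to be linearly independent and that this gives $\delta p_\perp=O(h_{\alpha\beta})$. But the coefficient vectors have norms $O(h_{\alpha\beta})$ and the right-hand sides are $O(h_{\alpha\beta}^2)$, so to conclude $\|\delta p_\perp\|=O(h_{\alpha\beta})$ you need the $2\times 2$ determinant to be bounded below by $c\cdot\|y_{k+1}'-y_\alpha'\|\,\|y_{k+2}'-y_\beta'\|$ for some $c>0$ depending only on $Y$ --- equivalently, a uniform lower bound on the \emph{angle} between the two directions. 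Mere linear independence (general position of $Y'$) gives a nonzero determinant for each fixed $Y'$, not a bound that is uniform as $h_{\alpha\beta}\to 0$; without it your bootstrap/Schur step has nothing to close against. This is precisely the step where the paper invokes Lemma~\ref{lem:non-flat} a \emph{second} time, now applied to the pair $(y_\alpha',y_{k+1}')$ against $(y_\beta',y_{k+2}')$, to bound $|\mb u\cdot\mb v|$ strictly away from $1$ and hence $\|\mb u'\wedge\mb v'\|$ from below by a constant $C_Y^{(3)}$. Your final sentence hints that the $2\times2$ lower bound should be ``controlled by the minimum inter-point distance in $Y$'', which is the right intuition, but you have not supplied the mechanism; that mechanism is this second use of Lemma~\ref{lem:non-flat}, not general position.
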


\begin{proof}
Let $\mr K_{Y'}$ and $\mr K_Y$ be the circumcenters of $Y'$ and $Y$, respectively. 
For each fixed $\mr K_Y$, the vector $\mr K_{Y'}-\mr K_Y$ can be determined by its perpendicularity to the affine subspace $\mathrm{aff\ }Y_k'$ spanned by $Y_k' := \{y_1',\cdots,y_k'\}$ and equidistance conditions
\begin{align*}
\|\mr K_{Y'} - y_{k+1}' \| ^2 & =\|\mr K_{Y'} - y_{\alpha}'\|^2 \\
\|\mr K_{Y'} - y_{k+2}' \| ^2 &= \|\mr K_{Y'} - y_{\beta}' \|^2.
\end{align*}
Let $\mathbf{u}'$ and $\mathbf{v}'$ be the orthogonal projection of $\mathbf{u} := \frac{y_{k+1}'-y_\alpha'}{\|y_{k+1}'-y_\alpha'\|}$ and $\mb{v}:=\frac{y_{k+2}'-y_\beta'}{\|y_{k+2}'-y_\beta'\|}$, respectively, onto the orthogonal complement of $\mathrm{aff\ }Y_k'$ in $\bR^{k+1}$. 
Then we can paraphrase the above equations as
\begin{align} \label{eq:circumcent-1}
\mb{u}' \cdot (\mr K_{Y'}-\mr K_Y) 
& = \mb{u} \cdot \left( \frac{y_{k+1}'+y_\alpha'}{2} -\mr K_Y \right) \\ \label{eq:circumcent-2}
\mb{v}' \cdot (\mr K_{Y'}-\mr K_Y) 
& = \mb{v} \cdot \left( \frac{y_{k+2}'+y_\beta'}{2} - \mr K_Y \right).
\end{align} 
Our strategy is to estimate the norm $\|{\mr K}_{Y'}-{\mr K}_Y\|$ of the solution of equations \eqref{eq:circumcent-1}-\eqref{eq:circumcent-2} in terms of coefficient vectors $\mb{u}'$ and $\mb{v}'$ and right-hand sides.

We deal with the coefficient vectors first. 
If each $y_i'$ is close enough to $y_i'$ for each $i\in\{1,\cdots,k\}$, by Lemma \ref{lem:non-flat} we have
\begin{align*}
\left| \mb{u}  \cdot \frac{y_j'-y_i'}{\|y_j'-y_i'\|} \right|
& \leq  \frac{\| y_{k+1}'-y_\alpha'\|}{\|y_j' - y_i'\| } \\
& \leq \frac{\| y_{k+1}'-y_\alpha'\|}{\|y_j - y_i\| - \|y_j'- y_j\| - \| y_i' - y_i \| }  \\
& \leq C_Y^{(0)} h_{\alpha \beta}
\end{align*}
and similar estimate for $\mb{v}$ 
(Hereafter, each symbol $C_Y^{(j)}$ will refer to a positive constant depending on $Y$). 
Since vectors $\{y_i'-y_k':i=1,\cdots,k-1 \}$ span the tangent space of $\mathrm{aff\ }Y_k'$, each $y_i' - y_k'$ is close enough to $y_i-y_k$ when $h_{\alpha\beta}$ is small enough and vectors $\{y_i-y_k:i=1,\cdots,k-1 \}$ are linearly independent,
we have
\begin{equation*}
\left\| \mb{u} - \mb{u}' \right \| , \| \mb{v} -\mb{v}'\| \leq C_Y^{(1)} h_{\alpha \beta}.
\end{equation*}
Then,
\begin{equation}\label{eq:circumcent-coef-estimate}
\begin{aligned}
\| \mb{u}' \wedge \mb{v}' \| 
& \geq \| \mb{u} \wedge \mb{v} \| - C_Y^{(2)} h_{\alpha\beta} \\
& = \big[1 - (\mb{u} \cdot \mb{v}) ^2  \big]^{\frac{1}{2}} - C_Y^{(2)} h_{\alpha\beta} \\
& \geq C_Y^{(3)} - C_Y^{(2)} h_{\alpha \beta},
\end{aligned}
\end{equation}
where the inequality on the last line is obtained by Lemma \ref{lem:non-flat}.

Next, we estimate right-hand sides of equations \eqref{eq:circumcent-1} and \eqref{eq:circumcent-2}. 
We can write $\mr K_Y  = c_1 y_1 + \cdots + c_k y_k$ for some real numbers $c_1,\cdots,c_k$ such that $c_1 + \cdots + c_k = 1$ and 
\begin{align*}
\frac{y_{k+1}'+y_{\alpha}'}{2} - \mr K_Y 
& = \frac{y_{k+1}'+y_{\alpha}'}{2} - (c_1 y_1' + \cdots + c_k y_k')  \\
&\phantom{{}={}\qquad\qquad} + \big[ c_1 (y_1' -y_1) + \cdots + c_k (y_k' - y_k) \big] \\
& = c_1 ' (y_{k+1}' - y_1') + \cdots + c_k' (y_{k+1}' - y_k') \\
& \phantom{ {}={}\qquad\qquad} + \big [ c_1(y_1' - y_1 ) + \cdots + c_k (y_k' - y_k)  \big]
\end{align*}
Then, by Lemma \ref{lem:non-flat} we have
\begin{equation} \label{eq:alphabet-intercept-estimate}
\begin{aligned}
\left| \mb{u} \cdot \left( \frac{y_{k+1}'+y_\alpha'}{2} -\mr K_Y \right)\right|
& \leq |c_1'|\ | \mb{u} \cdot (y_{k+1}' -y_1') | + \cdots + |c_k'|\ |\mb{u}\cdot (y_{k+1}' -y_k')| \\
& \phantom{\qquad\qquad} + \max_{1 \leq i \leq k} |c_i| \cdot  h_{\alpha\beta} \\
& \leq (|c_1'| + \cdots + |c_k'| ) \|y_{k+1}'-y_\alpha' \|  + \max_{1 \leq i \leq k} |c_i| \cdot  h_{\alpha\beta} \\
& \leq C_Y^{(4)} h_{\alpha \beta}
\end{aligned}
\end{equation}
and a similar estimate for $\mb{v}$. 

By estimates \eqref{eq:circumcent-coef-estimate} and \eqref{eq:alphabet-intercept-estimate}, the solution $\mr K_{Y'}-\mr K_Y$ of \eqref{eq:circumcent-1}-\eqref{eq:circumcent-2} satisfy $\|\mr K_{Y'}-\mr K_Y\| \leq C_Y^{(5)} h_{\alpha \beta}$. 
Finally, we conclude that
\begin{align*}
| {\rho_{Y'}} - \rho_Y |
= \big| {\|\mr K_{Y'}-y_1'\|} - {\|\mr K_Y-y_1\|}  \big|
\leq  {\|\mr K_{Y'}-\mr K_Y\| + \|y_1'-y_1\|}
\leq  C_Y h_{\alpha\beta}.
\end{align*}
\end{proof}

The above proposition corresponds to $\|r\|_1=2$ case of Theorem \ref{thm:main}. 
The ``exceptional'' case $\|r\|_1 =3$ and $\#X=1$ is explained through the following geometric observation.

\begin{prop} \label{prop:4pts-radius}
If any angle determined by three points in a $4$-point set $Y =\{\mr A,\mr B,\mr C,\mr D\} \subset \bR^3$ is acute, then the circumradius $\rho_Y$ satisfies
\begin{equation*}
\rho_Y \leq 2\, \mathrm{diam}\, Y.
\end{equation*}
In particular, the above inequality holds for any $4$-point similarity embedding.
\end{prop}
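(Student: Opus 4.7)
My approach would be by contradiction, exploiting the longest-edge frame. Normalize so that $\mathrm{diam}(Y) = 1$ and suppose $\rho_Y > 2$; pick an edge realizing the diameter, say $\|AB\| = 1$, and place coordinates with the midpoint $M_{AB}$ at the origin, $A = (-1/2, 0, 0)$, $B = (1/2, 0, 0)$. Because the circumcenter $K$ is equidistant from $A$ and $B$, it lies in the perpendicular bisector plane $\{x=0\}$; after rotating about the $x$-axis I may take $K = (0, 0, k)$ with $k = \sqrt{\rho_Y^2 - 1/4} > \sqrt{15}/2$.

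Writing $C = (c_1, c_2, c_3)$ and $D = (d_1, d_2, d_3)$, the acute conditions on $\triangle ABC$ and $\triangle ABD$ confine each of $C, D$ into a thin slab above the $x$-axis. Indeed, since $AB$ is the longest side of each of these triangles, acuteness at the third vertex forces $\|C\|^2, \|D\|^2 > 1/4$ (strictly outside the sphere of diameter $AB$), and acuteness at $A$ and $B$ gives $|c_1|, |d_1| < 1/2$. Subtracting the circumsphere identity at $A$ from that at $C$ yields $\|C\|^2 = 1/4 + 2 k c_3$, forcing $c_3 > 0$, while adding $\|C - A\|^2, \|C - B\|^2 \leq 1$ yields $\|C\|^2 \leq 3/4$, hence $c_3 \leq 1/(4k) < 1/(2\sqrt{15})$, and analogously for $d_3$. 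Thus as $\rho_Y$ grows, the four-point configuration flattens.

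The final step uses the remaining constraints: $\|C - D\| \leq 1$ together with the acuteness of the two unused faces $\triangle ACD$ and $\triangle BCD$. Substituting the formulas $c_2^2 = 1/4 - c_1^2 + 2 k c_3 - c_3^2$ and its analogue for $d_2^2$, a direct expansion shows that under the sign configuration $c_2 d_2 \leq 0$ one has
\[
\|C - D\|^2 \;\geq\; \tfrac{1}{2} - 2 c_1 d_1 + 2 k (c_3 + d_3) - 2 c_3 d_3,
\]
which must exceed $1$ once the face-wise identity $\rho_Y^2 = \rho_F^2 + h_F^2$ (applied for $F \in \{\triangle ACD, \triangle BCD\}$ together with $\rho_F \leq 1/\sqrt{3}$) supplies a sufficient lower bound on $k(c_3 + d_3)$. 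In the complementary case $c_2 d_2 > 0$ the four points all lie on one side of the $xz$-plane; the acute-at-$A$ condition in $\triangle ACD$ and the acute-at-$B$ condition in $\triangle BCD$ then rule this case out, yielding the desired contradiction.

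The main obstacle is the delicate algebraic bookkeeping required to close the $c_2 d_2 \le 0$ branch with a tight enough lower bound on $k(c_3 + d_3)$, and to rigorously exclude $c_2 d_2 > 0$ without lengthy case-splitting. A useful conceptual sanity check is the purely planar lemma that four coplanar points cannot have all four induced triangles acute: in convex position the four interior angles sum to $360^\circ$, so at least one is $\geq 90^\circ$, while in non-convex position the three angles at the interior point also sum to $360^\circ$, forcing one of them to be $\geq 120^\circ$. Since the $c_3, d_3 \to 0$ limit of our configuration is exactly this forbidden coplanar case, the acuteness inequalities must degenerate as $k \to \infty$; translating this qualitative statement into the quantitative estimate $\rho_Y \leq 2 \, \mathrm{diam}\, Y$ is precisely where the work concentrates.
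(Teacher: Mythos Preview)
Your setup and the preliminary constraints ($|c_1|,|d_1|<\tfrac12$, $\|C\|^2=\tfrac14+2kc_3$, $0<c_3,d_3\le \tfrac{1}{4k}$) are correct, and your case split on the sign of $c_2d_2$ matches the paper's split on the sign of $x_1$. But both case-closing steps contain genuine gaps.

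For $c_2d_2>0$ you claim that ``the acute-at-$A$ condition in $\triangle ACD$ and the acute-at-$B$ condition in $\triangle BCD$ rule this case out.'' Expanded, the first reads $(c_1+\tfrac12)(d_1+\tfrac12)+c_2d_2+c_3d_3>0$, and under your standing hypotheses every term on the left is positive; the second is symmetric. These inequalities are therefore \emph{automatically satisfied}, not violated, and exclude nothing. In the paper this is precisely the harder case: it is handled by exhibiting two convex elliptical regions $S_{\pm}$ (projections of the ``angle at $E$ non-acute'' loci) whose union covers the rectangle $|y|\le a,\ z_0\le z\le b$; acuteness then forces $z_1<z_0$, and swapping the roles of $C$ and $D$ forces $z_0<z_1$, a contradiction. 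Nothing in your outline substitutes for this step.

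For $c_2d_2\le 0$ your displayed inequality for $\|C-D\|^2$ is correct, but to contradict $\|C-D\|\le 1$ you need, after using $|c_1d_1|<\tfrac14$ and the negligible $c_3d_3$ term, essentially $k(c_3+d_3)>\tfrac12$. Yet your own bounds give $kc_3,\,kd_3\le\tfrac14$, i.e.\ $k(c_3+d_3)\le\tfrac12$, leaving no slack. The face identity $\rho_Y^2=\rho_F^2+h_F^2$ with $\rho_F\le 1/\sqrt3$ only bounds the distance $h_F$ from $K$ to the plane of $F$ from below; for instance with $F=\triangle ABC$ one gets $h_F=|c_2|k/\sqrt{c_2^2+c_3^2}$, which yields an \emph{upper} bound on $c_3$ in terms of $c_2$, not the lower bound on $k(c_3+d_3)$ you need. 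You do not indicate any mechanism that produces such a lower bound, and indeed $c_3$ and $d_3$ can be made arbitrarily small while all earlier constraints persist. The paper again closes this case via explicit convex regions ($T_{\pm}$), not via face circumradii.
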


\begin{proof}
Let us assume on the contrary that $\rho := \rho_Y > 2\,\mathrm{diam}\, Y$. 
By applying a suitable rigid transformation if necessary, we may assume that $Y$ is a subset of the sphere $x^2 +y^2 +z^2 = \rho^2$ and the coordinates are
\begin{equation*}
\mr A = (0,a,b),
\ \mr B = (0,-a,b), 
\ \mr C = (x_0, y_0, z_0) ,
\ \mr D = (x_1, y_1, z_1),
\end{equation*}
where
\begin{equation*}
\begin{gathered}
0<a< \frac{\rho}{4},\quad b=\sqrt{\rho^2 - a^2 }  > \frac{\rho}{2},\\
x_0 = \sqrt{\rho^2 - y_0^2 - z_0^2}, \quad x_1 = \pm \sqrt{ \rho^2 - y_1^2 - z_1^2}
.
\end{gathered}
\end{equation*}
Since $\mr A\mr C,\mr B\mr D \leq \mathrm{diam\ } Y< \frac{\rho}{2}$ and the angles $\mr A\mr C\mr B,\mr A\mr B\mr C,\mr A\mr D\mr B$ and $\mr A\mr B\mr D$ are acute, we have additional restrictions
\begin{equation*}
|y_0|,|y_1| < a,\quad 0< z_0,z_1 < b,
\end{equation*}
which implies $x_1 \neq 0$ in particular.
The proof proceeds by casework on the sign of $x_1$.

\textit{Case 1}. $x_1 > 0$. 
Let us consider the planar regions 
\begin{align*}
S_{+} &= \Big\{(y,z)\in \bR^2 : \mr E = (x,y,z),\ x  = \sqrt{\rho^2 - y^2 - z^2}, \ \overrightarrow{\mr E\mr A} \cdot \overrightarrow{\mr E\mr C} \leq 0 \Big\} \\
& = \{(y,z) \in \bR^2 : f_{a,\mr C} (y,z) \geq 0  \} \\
S_{-} &= \Big\{(y,z)\in \bR^2 : \mr E = (x,y,z),\ x  = \sqrt{\rho^2 - y^2 - z^2}, \ \overrightarrow{\mr E\mr B} \cdot \overrightarrow{\mr E\mr C} \leq 0 \Big\} \\
& = \{(y,z) \in \bR^2 : f_{-a,\mr C} (y,z) \geq 0  \},
\end{align*}
where
\begin{equation} \label{eq:aux-f-case1}
f_{\pm a,\mr C}(y,z) =  x_0^2 (\rho^2 -y^2  -z^2 ) - \big[(\pm a+y_0)(\pm a-y) + (b+z_0) (b-z) \big]^2  
\end{equation}
Thus, each of $S_{\pm}$ is equal to the union of an ellipse and its interior. 
Since we can put $\mr E=\mr A$ and $\mr E=\mr C$ in the definition of $S_{+}$ we have $(a,b),(y_0,z_0) \in S_{+}$. 
We also have $(a,z_0) \in S_+$ because
\begin{align*}
f_{a,\mr C}(a,z_0) 
& = x_0^2 (b^2 - z_0^2 )- \big[(b+z_0)(b-z_0)\big]^2 \\
&= (x_0^2 -b^2 +z_0^2 )(b^2 -z_0^2 ) \\
& = (a^2 - y_0^2 )(b^2 -z_0^2 ) \geq 0
\end{align*}
and $(u_+,b)\in S_+$ where $u_+:= \frac{ (y_0+a)^2 -x_0^2 }{(y_0+a)^2 +x_0^2}a$ because $f_{a,\mr C}\left(u_{+}, b \right)=0$. 
By convexity, $S_+$ contains the trapezoid with vertices $(a,b),\,(a,z_0),\,(y_0,z_0),\, (u_+,b)$. 
Likewise, $S_-$ contains the trapezoid with vertices $(-a,b),\, (-a,z_0),\, (y_0,z_0),\, (u_-,b)$, where $u_-:= \frac{ (y_0-a)^2 -x_0^2 }{(y_0-a)^2 +x_0^2}(-a)$. 
Because
\begin{align*}
u_{+} - u_{-} 
&= \frac{\big[(a+y_0 )^2 -x_0^2 \big]\big[(a-y_0)^2 +x_0^2 \big] + \big[(a-y_0)^2 - x_0^2\big]\big[(a+y_0)^2 +x_0^2\big]}{\big[(a+y_0)^2 +x_0^2 \big] \big[(a-y_0)^2 +x_0^2\big]}  \cdot a\\
& = \frac{  2(a+y_0 )^2 (a-y_0 )^2 - 2 x_0^4    }{\big[(a+y_0)^2 +x_0^2 \big] \big[(a-y_0)^2 +x_0^2\big]} \cdot a\\
& = \frac{  2(a^2 -y_0 ^2 + x_0^2 )(a^2 - y_0 ^2 - x_0 ^2 ) }{\big[(a+y_0)^2 +x_0^2 \big] \big[(a-y_0)^2 +x_0^2\big]} \cdot a \\
& = \frac{  2(a^2 -y_0 ^2 + x_0^2 )(z_0^2 - b^2  ) }{\big[(a+y_0)^2 +x_0^2 \big] \big[(a-y_0)^2 +x_0^2\big]} \cdot a  < 0,
\end{align*}
the union $S_{+} \cup S_{-}$ contains entire rectangle $|y| \leq a,\ z_0 \leq z \le b$. 
Since both angles $\rm ADC$ and $\rm BDC$ are acute, $(y_1,z_1) \not\in S_{+} \cup S_{-}$, which implies $z_1 <z_0$. 
The same argument with switching the roles of $\mr C$ and $\mr D$ gives the opposite result $z_0 < z_1$, which gives a contradiction. 

\textit{Case 2.} $x_1 < 0$. 
Let us consider planar regions
\begin{align*}
T_{+} 
& = \Big\{(y,z) \in \bR^2 : \mr E = (x,y,z),\ x=-\sqrt{\rho^2 - y^2 -z^2 },\ \overrightarrow{\mr A\mr C}\cdot \overrightarrow{\mr A\mr E} \leq 0  \Big\}  \\
& = \{(y,z) \in \bR^2 : f_{a,-\mr C}(y,z) \geq 0  \} \\
T_{-} 
& = \Big\{(y,z) \in \bR^2 : \mr E = (x,y,z),\ x=-\sqrt{\rho^2 - y^2 -z^2 },\ \overrightarrow{\mr B\mr C}\cdot \overrightarrow{\mr B\mr E} \leq 0  \Big\}  \\
& = \{(y,z) \in \bR^2 : f_{-a,-\mr C}(y,z) \geq 0  \} 
\end{align*}
where
\begin{align*}
f_{\pm a,-\mr C}(y,z) = x_0^2 (\rho^2 - y^2 -z^2 ) - \big[ (y_0 \mp a)(y \mp a) + (z_0 -b)(z-b)\big]^2.
\end{align*}
Note that the similarity with \eqref{eq:aux-f-case1} follows from $\overrightarrow{\mr A\mr C}\cdot \overrightarrow{\mr A\mr E} = \overrightarrow{\mr A\mr C}\cdot \overrightarrow{(-\mr C)\mr E} = \overrightarrow{\mr A\mr E}\cdot \overrightarrow{(-\mr C)\mr E}$. 
By a similar argument as in the previous step, the union $T_+ \cup T_-$ contains the entire rectangle $|y| \leq a,\ -z_0 \leq z \leq b$. 
Since both angles $\rm CAD$ and $\rm CBD$ must be acute, $(y_1,z_1) \not\in T_+ \cup T_- $, which implies $z_1 < -z_0 < 0$. 
This contradicts $z_1 > 0$.
\end{proof}

\begin{proof}[Proof of Theorem \ref{thm:main}, ``if'' part]
This follows from Proposition \ref{prop:R-conv+2} (if $\|r\|_1 \leq 2$) and Proposition \ref{prop:4pts-radius} (if $\|r\|_1=3$ and $\#X =1$), by Corollary \ref{cor:X-vs-Y}.
\end{proof}

\subsection{Proof of the ``only if'' part} \label{subsec:counterexamples}

We first examine a sufficient condition for $\lim_{n\rightarrow \infty} \rho_{Y_n} = \rho_Y$ to hold. 
Suppose that similarity embeddings $Y_n$ $(n=1,2,\cdots)$ and $Y$ are of the form
\begin{align*}
Y_n &= \{ y_n ^{(i,j)} : 1 \leq i \leq k ,\ 0 \leq j \leq r_i \} \quad (n=1,2,\cdots)\\
Y &=  \{ y^{(1)} ,  y^{(2)} , \cdots, y^{(k)} \},
\end{align*}
where $\lim_{n \rightarrow \infty} y_n^{(i,j)} = y^{(i)}$ for each $i=1,2,\cdots,k$ and $j=0,1,\cdots,r_i$. 
Then the circumcenter $\mr K_n := \mr K_{Y_n}$ of $Y_n$ is determined by equations
\begin{align*}
\|\mr K_n - y_n ^{(1,0)} \| 
&= \| \mr K_n - y_n^{(i,0)} \| \quad (2 \leq i \leq k) \\
\| \mr K_n - y_n^{(i,0)}\| 
& = \| \mr K_n - y_n^{(i,j)} \| \quad (1\leq i \leq k,\ 1 \leq j \leq r_i),
\end{align*}
or equivalently by
\begin{align*}
(y^{(1,0)}_n - y_n^{(i,0)}) \cdot (\mr K_n -\mr K)
& = \frac{\| y_n ^{(1,0)} -\mr K\|^2 - \|y_n^{(i,0)} -\mr K \|^2 }{2}  \quad (2 \leq i \leq k) \\
\frac{y_n^{(i,0)} - y_n^{(i,j)}}{\|y_n^{(i,0)} - y_n^{(i,j)} \|} \cdot (\mr K_n -\mr K)
& = \frac{\| y_n^{(i,0)} -\mr K \|^2 - \| y_n^{(i,j)}-\mr K \|^2}{2\|y_n^{(i,0)} - y_n^{(i,j)} \|} \quad (1\leq i \leq k,\ 1 \leq j \leq r_i)
\end{align*}
where $\mr K = \mr K_Y$ is the circumcenter of $Y$. 
Because the terms on the right-hand side converge to zero as $n \rightarrow \infty$, the condition
\begin{multline} \label{eq:vol-non-flat}
\liminf_{n \rightarrow \infty} \left| \mathrm{Vol} \left( 
y_n^{(1,0)} - y_n^{(2,0)},\cdots,y_n^{(1,0)} - y_n^{(k,0)}, 
\frac{y_n^{(1,0)} - y_n^{(1,1)}}{\|y_n^{(1,0)} - y_n^{(1,1)} \|} ,\cdots,
\frac{y_n^{(i,0)} - y_n^{(i,r_i)}}{\|y_n^{(i,0)} - y_n^{(i,r_i)} \|}
\right) \right| \\
= \liminf_{n \rightarrow \infty}  \left| \mathrm{Vol} \left(
y^{(1)} - y^{(2)},\cdots,y^{(1)} - y^{(k)}, 
\frac{y_n^{(1,0)} - y_n^{(1,1)}}{\|y_n^{(1,0)} - y_n^{(1,1)} \|} ,\cdots,
\frac{y_n^{(i,0)} - y_n^{(i,r_i)}}{\|y_n^{(i,0)} - y_n^{(i,r_i)} \|}
\right) \right| > 0.
\end{multline} 
implies $\lim_{n \rightarrow \infty} \|\mr  K_n -\mr  K \| = 0$ and equivalently $\lim_{n \rightarrow \infty} \rho_{Y_n} = \rho_Y$.
The condition \ref{eq:vol-non-flat} holds, for example, when vectors $y_n^{(i,0)}-y_n^{(i,j)}$ are pairwise orthogonal and orthogonal to $\mathrm{aff\ }Y$. 
The condition \ref{eq:vol-non-flat} means in general that the parallelopiped formed by vectors $y_n^{(i,0)}-y_n^{(i,j)}$ is kept away from two conditions: being parallel to $\mathrm{aff\ } Y$ and being degenerate. 
The former is always avoided because the angle between each $y_n^{(i,0)}-y_n^{(i,j)}$ and $\mathrm{aff\ }Y$ approaches the right angle, as mentioned immediately after Lemma \ref{lem:non-flat}. 
Then it must be the second condition that fails in the examples such that $\lim_{n \rightarrow \infty} \rho_{Y_n} \neq \rho_Y$. 
Thus, we attempt to search for counterexamples by taking a sequence $(Y_n)$ converging to $Y$ in a way such that the parallelopiped generated by vectors $y_n^{(i,0)}-y_n^{(i,j)}$ become ``gradually flat.'' 

In the following examples, we introduce a family $(Y_t)_{t>0}$ parametrized by positive real numbers. 
Each $Y_t$ will be a similarity embedding for sufficiently small $t>0$, but we defer the verification of the tri-similarity inequality to Appendix \ref{sec:computation}. 
We will denote by $\mr K_t  = \mr K_{Y_t}$ and $\mr K = \mr K_Y$ the circumcenters of $Y_t$ and $Y$, respectively, and by $\rho_t := \rho_{Y_t}$ and $\rho = \rho_Y$ the cirumradii of $Y_t$ and $Y$, respectively. 

\begin{ex} \label{ex:counter-(4)}
Here is a family $(Y_t)_{0<t < \delta}$ of $5-$point similarity embeddings clustered in type $\langle 4\rangle$ and a singleton similarity embedding $Y$ such that $\lim_{t\searrow 0}{Y_t = Y}$ in Hausdorff distance but $\lim_{t \searrow 0} \rho_{t} > \rho$.

Let $0 < s < \frac{6}{7}$  and $Y_t = \{\mr A,\mr B,\mr C,\mr D,\mr E\}$, where
\begin{align*}
&\mr A  = (-4t,0,0,0), 
\qquad \mr B=(2t,2{\sqrt{3}} t, 0,0),
\qquad \mr C = (2t,-2{\sqrt{3}} t, 0,0), \\
&\mr D  = (0,0,{3} t \sqrt{1- (st)^2 } , 3 s t^2 ) ,
\qquad\quad \mr E = (0,0,-{3} t \sqrt{1- (st)^2 }, 3 s t^2 ).
\end{align*}
Note that as $t \searrow 0$ all the points of $Y_t$ converge to the origin. 
On the other hand, the circumcenter and the circumradius of $Y_t$ are
\begin{align*}
\mr K_t = \left( 0,0,0, -\frac{7}{6}s \right) ,\qquad 
\rho_t = \sqrt{ 16t^2  + \frac{49}{36} s^2 },
\end{align*}
so we have $\lim_{t \searrow 0} \rho_t = \frac{7}{6} s > 0 = \rho$.
\end{ex}

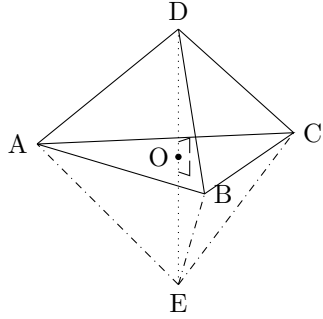
\begin{figure}
\centering
\begin{tikzpicture}
\tikzmath{\radiusa = 2.; \radiusb=0.5; \thetaA = 160;
\hD = 1.7;
}
\draw ({\radiusa*cos(\thetaA)},{\radiusb*sin(\thetaA)}) node[anchor=east]{$\mr A$}
-- ({\radiusa*cos(\thetaA+120)},{\radiusb*sin(\thetaA+120)}) node[anchor=west]{$\mr B$}
-- ({\radiusa*cos(\thetaA+240)},{\radiusb*sin(\thetaA+240)}) node[anchor=west]{$\mr C$}
-- ({\radiusa*cos(\thetaA)},{\radiusb*sin(\thetaA)});
\draw (0,\hD) node[anchor=south]{$\mr D$} -- ({\radiusa*cos(\thetaA)},{\radiusb*sin(\thetaA)});
\draw (0,\hD) -- ({\radiusa*cos(\thetaA+120)},{\radiusb*sin(\thetaA+120)});
\draw (0,\hD) -- ({\radiusa*cos(\thetaA+240)},{\radiusb*sin(\thetaA+240)});
\draw[dashdotted] (0,-\hD) node[anchor=north]{$\mr E$} -- ({\radiusa*cos(\thetaA)},{\radiusb*sin(\thetaA)});
\draw[dashdotted] (0,-\hD) -- ({\radiusa*cos(\thetaA+120)},{\radiusb*sin(\thetaA+120)});
\draw[dashdotted] (0,-\hD) -- ({\radiusa*cos(\thetaA+240)},{\radiusb*sin(\thetaA+240)});

\draw[dotted] (0,\hD) -- (0,-\hD);
\draw[fill] (0,0) node[anchor=east]{$\mr O$} circle (1pt);
\draw (0,0.2) -- (0.15,0.25) -- (0.15,0.05);
\draw (0,-0.2) -- (0.15,-0.25) -- (0.15,-0.05);
\end{tikzpicture}
\caption{The elements of $Y_t$ in Example \ref{ex:counter-(4)}. 
Three points $\mr D, \mr E$ and the origin $\mr O$ are not colinear.}
\end{figure}

\begin{ex} \label{ex:counter-(3)}
Here is a family $(Y_t)_{0<t < \delta}$ of $5-$point similarity embeddings clustered in type $\langle 3,0\rangle$ and a $2-$point similarity embedding $Y$ such that $\lim_{t\searrow 0}{Y_t = Y}$ in Hausdorff distance but $\lim_{t \searrow 0} \rho_{t} > \rho$. 

Let us put $\rho<\frac{1}{\sqrt{2}}$, and let $s$ be a positive real number such that $0<\cos s < \frac{2-4\rho^2}{8\rho}$ and $Y_t=\{\mr A,\mr B,\mr C,\mr D,\mr E\} \subset \bR^4$, where
\begin{align*}
\mr A &= (\rho,0,0,0) + ( -t^2 \cos s, -t^2 \sin s, \phantom{-}t\sqrt{1-t^2},\phantom{-\sqrt{1-t^2}}0), \\
\mr B &= (\rho,0,0,0) + ( + t^2 \cos s , \phantom{-} t^2 \sin s , \phantom{-\sqrt{1-t^2}}0,\phantom{-}t\sqrt{1-t^2 }), \\
\mr C &= (\rho,0,0,0) + (-t^2 \cos s,-t^2 \sin s,-t\sqrt{1-t^2},\phantom{-\sqrt{1-r^2}}0),\\
\mr D &= (\rho,0,0,0) + ( + t^2 \cos s , \phantom{-} t^2 \sin s ,\phantom{-\sqrt{1-r^2}}0,-t\sqrt{1-t^2 }), \\
\mr E &= (-\rho,0,0,0).
\end{align*}
Note that as $t\searrow 0$ four points $\rm A,B,C$ and $\mr D$ converges to the origin $\mr O=(0,0,0,0)$ so that $\lim_{t \searrow 0} Y_t = Y$ in Hausdorff distance to $Y=\{\mr O,\mr E\}$ and $\rho$ is indeed equal to the circumradius of $Y$.
On the other hand, the circumcenter and the circumradius of $Y_t$ are
\begin{align*}
\mr K_t &= \left(\frac{t^2}{4\rho},\frac{4\rho^2-t^2}{4\rho\tan s},0,0 \right), \\
\rho_t &=  \sqrt{\frac{\rho^2}{\sin^2 s}  - \frac{1}{2 \sin^2 s } t^2 + \frac{1}{4\rho^2 \sin ^2 s } t^4 },
\end{align*}
so we have $\lim_{t \searrow 0} \rho_t = \frac{\rho}{\sin s} > \rho$.
\end{ex}

\begin{figure}
\centering
\begin{tikzpicture}
\tikzmath{\radiusa = 2.; \radiusb=0.5; 
\radiusc = -1.99; \tiltc=0.5; \radiusf=sqrt(\radiusa*\radiusa-\radiusc*\radiusc);
\radiuse=0.5; \tilte=0.8; \radiusd=sqrt(\radiusa*\radiusa-\radiuse*\radiuse);
\arm = 6;
\thetaA=-50; \thetaC=110;
}
\draw (0,0) circle (\radiusa);
\draw[dotted,domain=0:180,samples=20] plot ({\radiusa*cos(\x)},{\radiusb*sin(\x)});
\draw[dashed,domain=180:360,samples=20] plot ({\radiusa*cos(\x)},{\radiusb*sin(\x)});
\draw[dashdotted] (0,0) -- (\arm,\arm*0.1) node[anchor=west]{$\mr E$};
\draw[fill] (\arm,\arm*0.1) circle(1pt);

\draw[dashed,domain=0:180,samples=20] plot ({\radiusf*cos(\x)-\tiltc*\radiusc*sin(\x)},{\tiltc*\radiusf*sin(\x)+\radiusc*cos(\x)});
\draw[dotted,domain=180:360,samples=20] plot ({\radiusf*cos(\x)-\tiltc*\radiusc*sin(\x)},{\tiltc*\radiusf*sin(\x)+\radiusc*cos(\x)});

\draw[dashed,domain=0:180,samples=20] plot ({\radiuse*cos(\x)-\tilte*\radiusd*sin(\x)},{\tilte*\radiuse*sin(\x)+\radiusd*cos(\x)});
\draw[dotted,domain=180:360,samples=20] plot ({\radiuse*cos(\x)-\tilte*\radiusd*sin(\x)},{\tilte*\radiuse*sin(\x)+\radiusd*cos(\x)});

\draw[fill] ({\radiusa*cos(\thetaA)},{\radiusb*sin(\thetaA)}) node[anchor=west]{$\mr A$} circle (1pt);
\draw[fill] ({\radiusa*cos(\thetaC)},{\radiusb*sin(\thetaC)}) node[anchor=west]{$\mr C$} circle (1pt);
\draw[fill] ({\radiusa*(-0.17)},{\radiusa*0.93}) node[anchor=east]{$\mr B$} circle(1pt);
\draw[fill] ({\radiusa*(-0.21)},{-\radiusa*0.96}) node[anchor=east]{$\mr D$} circle(1pt);
\end{tikzpicture}
\caption{The elements of $Y_t$ in Example \ref{ex:counter-(3)}. 
Four points $\mr A, \mr B, \mr C$ and $\mr D$ are on a common $2-$sphere centered at the origin and on a common $3-$dimensional hyperplane on which $\mr E$ is not.}
\end{figure}
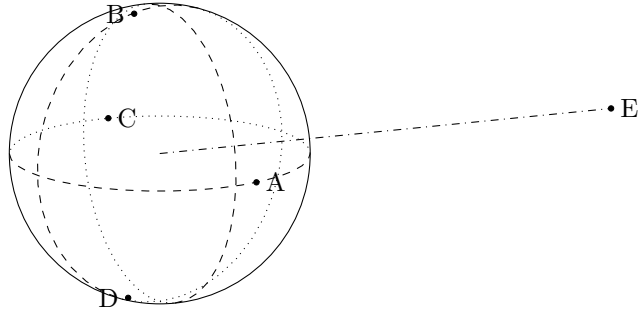

\begin{ex} \label{ex:counter-(2,1)}
Here is a family $(Y_t)_{0<t < \delta}$ of $5-$point similarity embeddings clustered in type $\langle2,1 \rangle$ and a $2-$point similarity embedding $Y$ such that $\lim_{t\searrow 0}{Y_t = Y}$ in Hausdorff distance but $\lim_{t \searrow 0} \rho_{t} > \rho$.

Let us put $\rho < \frac{1}{\sqrt{2}}$, and let $s>0$ and $Y_t = \rm \{A,B,C,D,E\} \subset \bR^4$, where
\begin{align*}
\mr A &= (-\rho,0,0,0),& \mr B&=\mr A+(t^2, \sqrt{3}t, t, st^2 ), &\mr C= \mr A+(t^2,\sqrt{3}t ,-t,st^2 )  \\
\mr D &= (\rho,0,0,0),& \mr E & = \mr D +(-t^2 , 2t,0,st^2 ).
\end{align*}
Note that as $t \searrow 0$ two points $\mr B,\mr C$ converge to $\mr A$ and point $\mr E$ converges to $\mr D$, so that $\lim_{t \searrow 0} Y_t = Y = \rm \{A,D\}$ in Hausdorff distance and $\rho$ is indeed equal to the circumradius of $Y$.

On the other hand, the circumcenter and the circumradius of $Y_t$ are
\begin{align*}
\mr K_t &= \left(0,0,0,\frac{2-\rho}{s} + \frac{1+s^2}{2s}t^2\right) ,
\\ \rho_t &= \sqrt{\rho^2 + \left( \frac{2-\rho}{s} + \frac{1+s^2}{2s}t^2 \right)^2 },
\end{align*}
so we have $\lim_{t \searrow 0} \rho_t = \sqrt{ \rho^2 + \left(\frac{2-\rho}{s}\right)^2 } > \rho$. 
\end{ex}

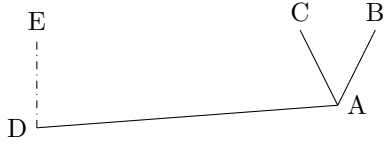
\begin{figure}
\centering
\begin{tikzpicture}
\tikzmath{\dx = 4; \dy=0.3; \bx=0.5; \by=1.0;
}
\draw (0,0) node[anchor=west]{$\mr A$} -- (-\dx,-\dy) node[anchor=east]{$\mr D$} ;
\draw (0,0) -- (\bx,\by) node[anchor=south]{$\mr B$};
\draw (0,0) -- (-\bx,\by) node[anchor=south]{$\mr C$};
\draw[dashdotted] (-\dx,-\dy) -- (-\dx,-\dy+1.18*\by) node[anchor=south]{$\mr E$};
\end{tikzpicture}
\caption{The elements of $Y_t$ in Example \ref{ex:counter-(2,1)}. 
Four points $\mr A, \mr B, \mr C$ and $\mr D$ are on a common $3-$dimensional hyperplane on which $\mr E$ is not.}
\end{figure}

\begin{ex} \label{ex:counter-(1,1,1)}
Here is a family $(Y_t)_{0<t <\delta}$ of $6-$point similarity embeddings clustered in type $\langle 1,1,1 \rangle$ and a $3-$point similarity embedding $Y$ such that $\lim_{t\searrow 0}{Y_t = Y}$ in Hausdorff distance but $\lim_{t \searrow 0} \rho_{t} > \rho$.

Let us put $\rho<\sqrt{\frac{2}{3}}$, and let $s>0$ and $Y_t =\rm \{A,B,C,D,E,F\} \in \bR^5$ be 
\begin{align*}
\mr A &= (-\rho,0,0,0,0), 
&\mr B &= \mr A+(t^2,0,st^2,\sqrt{2} t,0),  \\
\mr C &= \left( \frac{1}{2}\rho, \phantom{-}\frac{\sqrt{3}}{2}\rho, 0,0,0 \right), 
& \mr D &= \mr C+(0,-t^2,0,t,t), \\
\mr E&=\left( \frac{1}{2}\rho, -\frac{\sqrt{3}}{2}\rho, 0,0,0 \right),
&\mr F&=\mr E+(0,t^2,0,t,-t),
\end{align*}
Note that as $t \searrow 0$ points $\mr B,\mr D$ and $\mr F$ converge to $\mr A,\mr C$ and $\mr E$, respectively, so that $\lim_{t \searrow 0} Y_t = Y$ in Hausdorff distance and $\rho$ is indeed equal to the circumradius of $Y = \rm \{A,B,C\}$. 
On the other hand, the circumcenter and the circumradius of $Y_t$ are
\begin{align*}
\mr K_t &= \left(0,0, 
\frac{2-2\sqrt{2}+(\sqrt{6}-2)\rho}{2s} + \frac{1-\sqrt{2}+s^2}{2s} t^2   , 
\frac{2-\sqrt{3}\rho}{2} t +\frac{1}{2} t^3  ,0 \right) \\
\rho_t &= \sqrt{ \rho^2 + \left( \frac{2-2\sqrt{2}+(\sqrt{6}-2)\rho}{2s} \right)^2 + O(t^2) },
\end{align*}
so we have $\lim_{t \searrow 0} \rho_t > \rho$.
\end{ex}

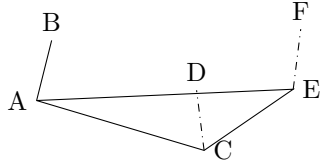
\begin{figure}
\centering
\begin{tikzpicture}
\tikzmath{
\radiusa=2.0; \radiusb=0.5; \thetaA=160;
\bx=0.2; \by=0.8;
\dx=-0.1; \dy=0.8;
\fx=0.1; \fy=0.8;
}
\draw ({\radiusa*cos(\thetaA)},{\radiusb*sin(\thetaA)}) node[anchor=east]{$\mr A$}
-- ({\radiusa*cos(\thetaA+120)},{\radiusb*sin(\thetaA+120)}) node[anchor=west]{$\mr C$}
-- ({\radiusa*cos(\thetaA+240)},{\radiusb*sin(\thetaA+240)}) node[anchor=west]{$\mr E$}
-- ({\radiusa*cos(\thetaA)},{\radiusb*sin(\thetaA)});

\draw ({\radiusa*cos(\thetaA)},{\radiusb*sin(\thetaA)}) 
-- ({\radiusa*cos(\thetaA)+\bx},{\radiusb*sin(\thetaA)+\by}) node[anchor=south]{$\mr B$};

\draw[dashdotted] ({\radiusa*cos(\thetaA+120)},{\radiusb*sin(\thetaA+120)})
-- ({\radiusa*cos(\thetaA+120)+\dx},{\radiusb*sin(\thetaA+120)+\dy}) node[anchor=south]{$\mr D$};

\draw[dashdotted]  ({\radiusa*cos(\thetaA+240)},{\radiusb*sin(\thetaA+240)})
--  ({\radiusa*cos(\thetaA+240)+\fx},{\radiusb*sin(\thetaA+240)+\fy}) node [anchor=south]{$\mr F$};
\end{tikzpicture}
\caption{The elements of $Y_t$ in Example \ref{ex:counter-(1,1,1)}. 
Four points $\mr A, \mr B, \mr C$ and $\mr D$ are on a common $3-$dimensional hyperplane on which $\mr E$ and $\mr F$ are not.}
\end{figure}

We will augment each $Y_t$ in Examples \ref{ex:counter-(4)}-\ref{ex:counter-(1,1,1)} to generate other counterexamples with various cardinalities and cluster types for Theorem \ref{thm:main}. 
We need a couple of additional lemmas for the augmentation process.

\begin{lem} \label{lem:add-1-pt}
Let $Y = \{y_1,\cdots,y_{k}\}$ be a similarity embedding in $\bR^{k-1} \subset \bR^{k}$ with circumradius $\rho_Y < 1$. 
For each $\varepsilon > 0$ there exists $y_{k}' \in \bR^{k}$ such that $\| y_{k} - y_{k}'\| < \varepsilon$, $|\rho_{Y\cup\{y_k'\}} - \rho_Y| < \varepsilon$ and $Y \cup \{y_k'\}$ is also a similarity embedding. 
\end{lem}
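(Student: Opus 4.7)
My plan is to realize $y_k'$ as a small rotation of $y_k$ within the unit sphere $\{\|y\|=1\}$ into a direction perpendicular to $\bR^{k-1}$. This rotation (as opposed to a translation) is geometrically forced: every similarity-embedding point has unit norm, since $y \cdot y = e^{-d(y,y)} = 1$, so a naive perpendicular translation $y_k' = y_k + \eta e$ cannot itself land on a similarity embedding. Fixing a unit vector $e \in \bR^k$ orthogonal to $\bR^{k-1}$, I will work with the one-parameter family
\begin{equation*}
y_k'(s) := s\,y_k + \sqrt{1-s^2}\,e, \qquad s \in (0,1),
\end{equation*}
which satisfies $\|y_k'(s)\| = 1$, $\|y_k - y_k'(s)\|^2 = 2(1-s)$, and $y_k'(s)\cdot y_i = s\,(y_k \cdot y_i)$ for each $i \leq k$. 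In metric-space language, this last identity forces $d(x_k', x_i) = d(x_k', x_k) + d(x_k, x_i)$, so the new point $x_k'$ sits ``collinearly opposite'' to every $x_i$ as seen from $x_k$.

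The central step is verifying that $Y\cup\{y_k'(s)\}$ is still a similarity embedding, i.e., that tri-similarity survives for every triple. Triples inside $Y$ are unchanged. For a triple $(y_k', y_k, y_i)$, substituting the inner-product relations shows that the tri-similarity with middle $y_k$ becomes an equality (matching the collinearity just noted), while the two other choices reduce to $(1-s^2)(y_k\cdot y_i) \geq 0$ and $(y_k\cdot y_i)^2 \leq 1$, both automatic from $y_k\cdot y_i \in (0,1]$. For a triple $(y_k', y_i, y_j)$ with $i,j \neq k$, middles $y_i$ and $y_j$ reproduce exactly the original triangle inequalities of $X$, whereas middle $y_k'$ reduces to $y_i\cdot y_j \geq s^2 (y_k\cdot y_i)(y_k\cdot y_j)$---strictly weaker than the original tri-similarity $y_i\cdot y_j \geq (y_k\cdot y_i)(y_k\cdot y_j)$ (the middle-$y_k$ inequality on $\{y_i,y_k,y_j\}$) because $s^2 < 1$ and all inner products are positive.

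For the circumradius, every point in $\bR^k$ equidistant from all of $Y$ lies on the line $\mr K_Y + \bR\,e$. Writing the circumcenter of $Y\cup\{y_k'(s)\}$ as $\mr K_Y + \lambda\,e$ and enforcing equidistance with $y_k'(s)$, a brief calculation using $\|\mr K_Y - y_k\|^2 = \rho_Y^2$ and $\|y_k\|=1$ gives
\begin{equation*}
\lambda = \sqrt{\tfrac{1-s}{1+s}}\,(\mr K_Y \cdot y_k), \qquad \rho_{Y\cup\{y_k'(s)\}}^2 = \rho_Y^2 + \lambda^2,
\end{equation*}
so $\rho_{Y\cup\{y_k'(s)\}} \to \rho_Y$ as $s \nearrow 1$, and in particular $\rho_{Y\cup\{y_k'(s)\}} < 1$ for $s$ close enough to $1$. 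Choosing such an $s$ delivers all three conclusions simultaneously. I expect no serious obstacle beyond the tri-similarity check above, and the truly delicate point is the middle-$y_k'$ case of $(y_k', y_i, y_j)$: the strict inequality $s^2 < 1$ provides exactly the slack needed, and pushing naively toward $s = 1$ either collapses $y_k'$ onto $y_k$ or (in translation form) violates tri-similarity.
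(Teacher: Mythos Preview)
Your argument has a setup inconsistency that breaks it as written. You need both $\|y_i\|=1$ for every $i$ (so that tri-similarity reduces to the inner-product inequality $y_a\cdot y_c\ge (y_a\cdot y_b)(y_b\cdot y_c)$) and $e\cdot y_i=0$ for every $i$ (so that $y_k'(s)\cdot y_i=s\,(y_k\cdot y_i)$ and $\|y_k'(s)\|=1$). Together these put the $k$ points of $Y$ on the unit sphere of the linear hyperplane $e^\perp\cong\bR^{k-1}$; since a $k$-point similarity embedding is in general affine position, that unit sphere must then be the circumsphere of $Y$, forcing $\rho_Y=1$ and contradicting the hypothesis. Equivalently: in the Gram placement the vectors $y_1,\dots,y_k$ are linearly independent and span $\bR^k$, so no nonzero $e\in\bR^k$ is orthogonal to all of them. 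The idea is salvageable by moving up one dimension---place $Y$ on the unit sphere of $\bR^k\subset\bR^{k+1}$ and take $e=e_{k+1}$---and then your tri-similarity check goes through verbatim. But the circumcenter calculation must be redone: the locus in $\bR^{k+1}$ equidistant from $Y$ is now the $2$-plane $\mr K_Y+\bR n+\bR e$ (with $n$ the normal to $\mathrm{aff}\,Y$ inside $\bR^k$), and $\mr K_{Y'}$ acquires an $n$-component, so your displayed formula for $\lambda$ and the identity $\rho_{Y'}^2=\rho_Y^2+\lambda^2$ are no longer correct. One still gets $\rho_{Y'}^2-\rho_Y^2=O(1-s)\to 0$, and since $Y\cup\{y_k'\}$ has $k$-dimensional affine span a rigid motion returns it to $\bR^k$.

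For comparison, the paper avoids this dimension bookkeeping by rotating $y_k$ about the affine span of $\{y_1,\dots,y_{k-1}\}$ rather than about the origin. That rotation preserves $\|y_i-y_k'\|=\|y_i-y_k\|$ for $i<k$, so every triangle not containing both $y_k$ and $y_k'$ is congruent to one already in $Y$, and the only new triangles $\{y_i,y_k,y_k'\}$ are isosceles with one side tending to zero---making the tri-similarity check immediate without any inner-product reduction.
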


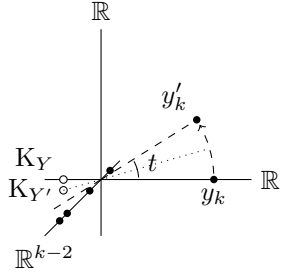
\begin{figure}
\centering
\begin{tikzpicture}
\tikzmath{
\radk = 1.5; \ang = 32;
\radik = 0.5; \radc = -0.5;
}
\draw (-0.75,0) -- (2,0) node[anchor=west]{$\bR$};
\draw (0.25,0.25) -- (-0.75,-0.75) node[anchor=north]{$\bR^{k-2}$};
\draw (0,-0.75) -- (0,2) node[anchor=south]{$\bR$};
\draw (\radk,0) node[fill,circle,inner sep=1pt]{};
\draw (\radk,0) node[anchor=north]{$y_k$};
\draw[dashed] (0,0) -- ({\radk*cos(\ang)},{\radk*sin(\ang)}) node[fill,circle,inner sep=1pt]{};
\draw ({\radk*cos(\ang)},{\radk*sin(\ang)}) node[anchor=south east]{$y_k'$};
\draw[dashed,->] ({\radk*cos(3)},{\radk*sin(3)}) arc(3:\ang-3:\radk);

\draw (\radc,0) node[fill,color=white,draw=black,circle,inner sep=1pt]{};
\draw (\radc,0) node[anchor=south east]{$\mr K_Y$};
\draw (\radc,{\radc*tan(\ang/2)}) node[fill,color=white,draw=black,circle,inner sep=1pt]{};
\draw[dotted] (\radc,{\radc*tan(\ang/2)}) node[anchor=east]{$\mr K_{Y'}$}
-- ({\radk*cos(\ang/2)},{\radk*sin(\ang/2)});
\draw[dashed] (0,0) -- ({1.5*\radc*cos(\ang)},{1.5*\radc*sin(\ang)});

\draw ({\radik*cos(3)},{\radik*sin(3)}) arc(3:\ang-3:\radik);
\draw ({\radik*cos(3)},{\radik*sin(3)-0.04}) node[anchor=south west]{$t$};
\draw (-0.55,-0.55) node[fill,circle,inner sep=1pt]{};
\draw (-0.45,-0.45) node[fill,circle,inner sep=1pt]{};
\draw (-0.15,-0.15) node[fill,circle,inner sep=1pt]{};
\draw (0.12,0.12) node[fill,circle,inner sep=1pt]{};
\end{tikzpicture}
\caption{The point $y_k'$ obtained by rotating $y_k$ and the circumcenters $\mr K_{Y'}$ and $\mr K_Y$.}
\label{fig:aug-lem-1}
\end{figure}

\begin{proof}
By applying a suitable rigid transform, we may assume that $y_1,\cdots,y_{k-1} \in \bR^{k-2} \times \{(0,0)\}$ and $y_{k} =(0,\cdots,0,a,0) \in \{(0,\cdots,0)\} \times \bR \times \{0\}$. 
Let $t$ be a real number and $y_k' = (0,\cdots,0,a \cos t, a \sin t) \in \bR^k$. 
If $\mr K_Y=(p,q,0) \in \bR^{k-2}\times \bR \times \{0\}$ is the circumcenter of $Y$, then $\|y_k - y_k'\| = 2|a| \sin \frac{t}{2}$ and the circumcenter $\mr K_{Y'}$ and the circumradius $\rho_{Y'}$ of $Y' = Y \cup \{y_k'\}$ are
\begin{align*}
\mr K_{Y'} & = \mr K_Y + \left( 0,\cdots,0,0, q  \tan \frac{t}{2} \right) \\
\rho_{Y'} & = \sqrt{ \rho_Y ^2  +  q^2 \tan^2 \frac{t}{2}}.
\end{align*}
(See Figure \ref{fig:aug-lem-1} for an illustration.) By letting $t$ sufficiently close to zero, both $\|y_k - y_k'\|$ and $|\rho_{Y'}-\rho_Y|$ can be made arbitrarily small. 
Moreover, when $t$ is sufficiently close to zero, the tri-similarity inequality \eqref{eq:strongly-acute} is satisfied for each triple of $Y'$. 
Indeed, if a triple in $Y'$ contains at most one of $y_k$ and $y_k'$, inequality \eqref{eq:strongly-acute} holds because both $Y$ and $\{y_1,\cdots,y_{k-1},y_k'\}$ are similarity embeddings. 
Otherwise, the triple is of the form $y_i,y_k,y_k'$ $(1\leq i < k)$ and inequality \eqref{eq:strongly-acute} holds because $\|y_k - y_k' \|$ can be made arbitrarily small and $\|y_i -y_k\| = \| y_i - y_k' \|$.
\end{proof}

\begin{lem} \label{lem:push-1-pt}
Let $Y_1$ and $Y_2 \supset Y_1$ be similarity embeddings in $\bR^d$ such that 
\begin{align*}
\rho_{Y_2} &< c_0 , &\sqrt{\rho_{Y_2}^2 - \rho_{Y_1}^2} &> c_1 , \\
\mathrm{diam\,} Y_2 &< c_2 \rho_{Y_2}  \leq 2\rho_{Y_2}, & c_0^{-1} + \sqrt{c_0^{-2}-1} &< c_3
\end{align*}
for some positive constants $c_0$, $c_1$, $c_2$, and $c_3$. 
If $y' \in \bR^{d+1}$ is the point such that the orthogonal projection of $y'$ to $\mathrm{aff\ } Y_2$ is equal to $\mr K_{Y_2}$ and $\|\mr K_{Y_2} - y' \|  = c_3\rho_{Y_2}$, then there exist $\delta_0,\delta_1 >0$ such that $Y_1' := Y_1 \cup \{y'\}$ and $Y_2' := Y_2 \cup \{y'\}$ satisfy the following conditions: 
\begin{itemize}
\item $\sqrt{\rho_{Y_2'}^2 - \rho_{Y_1'}^2}  > \delta_0$.
\item The dilated subsets $uY_1'$ and $uY_2'$ are similarity embeddings for all $u$ such that $0<u<\delta_1$.
\end{itemize}
\end{lem}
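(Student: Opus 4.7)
The plan rests on the following identity, which I will use repeatedly: if $Z_1 \subset Z_2$ are finite subsets in general position in their respective affine hulls, then $\mr K_{Z_2} - \mr K_{Z_1}$ is orthogonal to the tangent space $T_{Z_1}$ of $\mathrm{aff}\, Z_1$, and consequently $\rho_{Z_2}^2 - \rho_{Z_1}^2 = \|\mr K_{Z_2} - \mr K_{Z_1}\|^2$. This follows because the equidistance conditions $\|z - \mr K_{Z_j}\|^2 = \rho_{Z_j}^2$ for $z \in Z_1$ force $(z-z')\cdot(\mr K_{Z_2}-\mr K_{Z_1}) = 0$ for every pair $z, z' \in Z_1$, after which Pythagoras gives the stated equality. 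I will apply this identity to the three inclusions $Y_1 \subset Y_2$, $Y_1 \subset Y_1'$, and $Y_2 \subset Y_2'$.

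For the first assertion, I compute both circumradii explicitly. Since $y' - \mr K_{Y_2}$ has length $c_3 \rho_{Y_2}$ and is normal to $\mathrm{aff}\, Y_2$, writing $\mr K_{Y_2'} = \mr K_{Y_2} + t\mb{n}$ and imposing equidistance yields $\rho_{Y_2'}^2 = \rho_{Y_2}^2 (c_3^2+1)^2/(4 c_3^2)$. For $\rho_{Y_1'}$ I decompose $y' - \mr K_{Y_1}$ as $(\mr K_{Y_2} - \mr K_{Y_1}) + (y' - \mr K_{Y_2})$; both summands lie in $T_{Y_1}^\perp$ (the first by the identity above applied to $Y_1 \subset Y_2$, the second because $T_{Y_1} \subset T_{Y_2}$), so $\|y' - \mr K_{Y_1}\|^2 = (\rho_{Y_2}^2 - \rho_{Y_1}^2) + c_3^2 \rho_{Y_2}^2 = (c_3^2+1)\rho_{Y_2}^2 - \rho_{Y_1}^2$, and the same Pythagorean setup for $\mr K_{Y_1'}$ on the axis $\mr K_{Y_1} + T_{Y_1}^\perp$ produces a closed form for $\rho_{Y_1'}^2$. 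Subtracting and simplifying yields
\[
\rho_{Y_2'}^2 - \rho_{Y_1'}^2 = \frac{(c_3^2+1)^2\, \rho_{Y_2}^2\, (\rho_{Y_2}^2 - \rho_{Y_1}^2)}{4 c_3^2\, \big[(c_3^2+1)\rho_{Y_2}^2 - \rho_{Y_1}^2\big]},
\]
and bounding the denominator above by $(c_3^2+1)\rho_{Y_2}^2$ together with $\rho_{Y_2}^2 - \rho_{Y_1}^2 > c_1^2$ gives $\rho_{Y_2'}^2 - \rho_{Y_1'}^2 > c_1^2/4$, so $\delta_0 = c_1/2$ works.

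For the second assertion I aim to verify that every triple in $Y_2'$ is strictly acute and then invoke the standard observation that this forces tri-similarity in all sufficiently small dilates. Triples already contained in $Y_2$ are strictly acute because the tri-similarity inequality with strictly positive distances strengthens strict acuteness. For a mixed triple $(y_i, y_j, y')$ with $y_i, y_j \in Y_2$, the common length $\|y'-y_i\|=\|y'-y_j\|=\rho_{Y_2}\sqrt{c_3^2+1}$ makes the triangle isoceles at $y'$; the angle at $y'$ has cosine $1 - \|y_i-y_j\|^2/[2(c_3^2+1)\rho_{Y_2}^2]$, which is strictly positive because $\mathrm{diam}\, Y_2 \leq 2\rho_{Y_2}$ and $c_3 \geq 1$ (the latter follows from the hypothesis on $c_3$, since the existence of $\sqrt{c_0^{-2}-1}$ forces $c_0 \leq 1$ and hence $c_3 > c_0^{-1} \geq 1$), while the two base angles are automatically acute. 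The scaled tri-similarity inequality on $u Y_2'$ then rearranges to $\|y_i - y_j\|^2 + \|y_j - y_k\|^2 - \|y_i - y_k\|^2 \geq (u^2/2)\|y_i - y_j\|^2\|y_j - y_k\|^2$, whose left side has a uniform positive lower bound over the finite collection of triples in $Y_2'$ while the right side tends to zero with $u$; choosing $c_4$ below this threshold and below $1/\rho_{Y_2'}$ gives $u \rho_{Y_2'} < 1$ as well, and the same $c_4$ handles $Y_1' \subset Y_2'$.

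The main obstacle is the algebraic bookkeeping in deriving and simplifying $\rho_{Y_2'}^2 - \rho_{Y_1'}^2$, together with the orthogonal-decomposition argument identifying $\|y' - \mr K_{Y_1}\|^2$; one must also verify that $Y_1'$ and $Y_2'$ remain in general position so the circumradius identity applies, which holds because $y'$ lies off $\mathrm{aff}\, Y_2 \supset \mathrm{aff}\, Y_1$. The specific threshold $c_3 > c_0^{-1}+\sqrt{c_0^{-2}-1}$ enters the proof only through its weak consequence $c_3 \geq 1$; the finer form of this constant presumably matters for a later inductive application where the exact size of $\rho_{Y_2'}$ must be controlled.
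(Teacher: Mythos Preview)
Your argument is correct and reaches the same conclusions as the paper, but by a genuinely different route in both parts.

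For the radius gap, the paper argues geometrically: it observes that $\mr K_{Y_1'}$ is the orthogonal projection of $\mr K_{Y_2'}$ onto the line through $\mr K_{Y_1}$ and $y'$, invokes the similarity of the triangles $y'\mr K_{Y_1}\mr K_{Y_2}$ and $y'\mr K_{Y_2'}\mr K_{Y_1'}$, and reads off $\|\mr K_{Y_2'}-\mr K_{Y_1'}\|$ as a ratio of known lengths. You instead compute $\rho_{Y_2'}^2$ and $\rho_{Y_1'}^2$ separately via the one--dimensional Pythagorean setup on the normal axes and subtract; your closed formula
\[
\rho_{Y_2'}^2-\rho_{Y_1'}^2=\frac{(c_3^2+1)^2\rho_{Y_2}^2(\rho_{Y_2}^2-\rho_{Y_1}^2)}{4c_3^2\big[(c_3^2+1)\rho_{Y_2}^2-\rho_{Y_1}^2\big]}
\]
is exactly the square of the paper's expression, and your bound $\delta_0=c_1/2$ is clean and uniform in the constants. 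For the second assertion, the paper bounds each scaled tri-similarity expression case by case, extracting explicit thresholds for $u$ in terms of $c_0,c_2,c_3$; you instead reduce to strict acuteness of every triangle in $Y_2'$ and then use a finite--minimum argument. Your version is shorter and also catches the condition $u\rho_{Y_2'}<1$, which the paper leaves implicit. The trade--off is quantitativity: as phrased, your $c_4$ depends on the particular $Y_2'$ through the minimum over its finitely many triples, whereas the paper's explicit thresholds depend only on $c_0,c_2,c_3$; since the lemma is applied in the paper to an entire sequence $(Y_n)$ with a common $\delta_1$, you would want to note that each of your acute--angle margins can in fact be bounded below using only $c_0,c_1,c_2,c_3$ (the isoceles base case factors as $\|y_i-y_j\|^2\big[1-\tfrac{u^2}{2}(1+c_3^2)\rho_{Y_2}^2\big]$, so no lower bound on $\|y_i-y_j\|$ is needed). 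Your closing remark that only the weak consequence $c_3>1$ of the hypothesis on $c_3$ is used matches what the paper's own computations actually require.
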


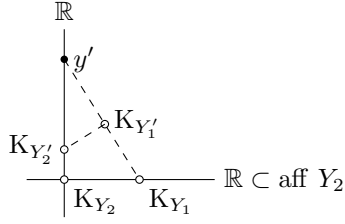
\begin{figure}
\centering
\begin{tikzpicture}
\tikzmath{
\gapot = 1; \yprime = 1.6; \ytp = 0.4;
\yopx = {(\gapot*\yprime*(\yprime-\ytp))/(\gapot*\gapot+\yprime*\yprime)};
\yopy = {(\yprime*(\gapot*\gapot+\yprime*\ytp))/(\gapot*\gapot+\yprime*\yprime)};
}
\draw (-0.5,0)--(2,0) node[anchor=west]{$\bR \subset\mr{aff\ } Y_2$};
\draw (0,-0.5)--(0,2) node[anchor=south]{$\bR$};
\draw (0,0) node[fill,color=white,draw=black,circle,inner sep=1pt]{};
\draw (0,0) node[anchor=north west]{$\mr{K}_{Y_2}$};
\draw (\gapot,0) node[fill,color=white,draw=black,circle,inner sep=1pt]{};
\draw (\gapot,0) node[anchor=north west]{$\mr{K}_{Y_1}$};

\draw (0,\yprime) node[anchor=west]{$y'$};
\draw[dashed] (0,\yprime) node[fill,circle,inner sep=1pt]{} -- (\gapot,0);

\draw (0,\ytp) node[fill,color=white,draw=black,circle,inner sep=1pt]{};
\draw (0,\ytp) node[anchor=east]{${\mr K}_{Y_2'}$};

\draw (\yopx,\yopy) node[fill,color=white,draw=black,circle,inner sep=1pt]{};
\draw[dashed] (\yopx,\yopy) node[anchor=west]{${\mr K}_{Y_1'}$} -- (0,\ytp);

\end{tikzpicture}
\caption{Added point $y'$ and the circumcenters $\mr K_{Y_1}, \mr K_{Y_2}, \mr K_{Y_1'},$ and $\mr K_{Y_2'}$.}
\label{fig:aug-lem-2}
\end{figure}

\begin{proof}
Since both vectors $y' - {\mr K}_{Y_2}$ and ${\mr K}_{Y_2}-{\mr K}_{Y_1}$ are orthogonal to $\mathrm{aff\ } Y_1$, so is $y' - {\mr K}_{Y_1} = (y' - {\mr K}_{Y_2} ) + ({\mr K}_{Y_2}-{\mr K}_{Y_1})$.
Then for $j=1,2$, by putting ${\mr K}_{Y_j'} = {\mr K}_{Y_j} + t_j (y'-{\mr K}_{Y_j})\ (t_j \in \bR)$ and using $\| y' - {\mr K}_{Y_j'} \|^2 = \| {\mr K}_{Y_j '} - y \|^2 = {\rho_{Y_j}^2 + \| {\mr K}_{Y_j} - {\mr K}_{Y_j'} \| ^2}$ for any $y \in Y_j$, we can find $t_j$ and that
\begin{align*}
\|y' -{\mr K}_{Y_1'} \| 
& = \frac{\|y' - {\mr K}_{Y_1}\|^2 + \rho_{Y_1}^2}{2 \| y' - {\mr K}_{Y_1} \|} \\
\|y' -{\mr K}_{Y_2'} \| 
& = \frac{\|y' - {\mr K}_{Y_2}\|^2 + \rho_{Y_2}^2}{2 \| y' - {\mr K}_{Y_2} \|} .
\end{align*}
Moreover, from
\begin{align*}
\|y' - {\mr K}_{Y_2}  \|^2 + \rho_{Y_2}^2 
& = \| y'-{\mr K}_{Y_1}\|^2 - \| {\mr K}_{Y_1} -{\mr K}_{Y_2}\|^2 + \rho_{Y_2}^2 \\
& = \| y' - {\mr K}_{Y_1} \|^2 - (\rho _{Y_2}^2 - \rho_{Y_1}^2 ) + \rho_{Y_2} ^2 \\
& = \| y' - {\mr K}_{Y_1} \|^2 + \rho_{Y_1}^2
\end{align*}
we obtain
\begin{equation*}
\frac{\| y' - {\mr K} _{Y_1'} \|}{\|y'-{\mr K_{Y_2'}}\| } =  \frac{\| y' - {\mr K}_{Y_2}\|}{\| y' - {\mr K}_{Y_1} \|}.
\end{equation*}
This implies the similarity of triangles $y'\mr K_{Y_1} \mr K_{Y_2}$ and $y' \mr K_{Y_2'} \mr K_{Y_1'}$, which in turn implies
\begin{equation*}
\frac{\| \mr K_{Y_1} - \mr K_{Y_2} \|}{\| \mr K_{Y_2'} - \mr K_{Y_1'} \|} 
= \frac{\| y' - \mr K_{Y_1} \|}{\| y' - \mr K_{Y_2'}\|}.
\end{equation*}
Then,
\begin{align*}
\sqrt{\rho_{Y_2'}^2 - \rho_{Y_1'}^2}
& = \| \mr K_{Y_2'} - \mr K_{Y_1'} \| \\
& = \frac{\| \mr K_{Y_1} - \mr K_{Y_2} \| \cdot \| y' - \mr K_{Y_2'}\|}{\| y' - \mr K_{Y_1} \|} \\ 
& = \frac{\sqrt{\rho_{Y_2}^2 - \rho_{Y_1}^2} \cdot \frac{1}{2} (c_3 +c_3^{-1})\rho_{Y_2}}{\sqrt{(\rho_{Y_2}^2 - \rho_{Y_1}^2) + (c_3 \rho_{Y_2})^2}}\\
& > \frac{c_1 (c_3 +c_3^{-1})c_1}{c_1 \sqrt{1+c_3^2}}.
\end{align*}
(See Figure \ref{fig:aug-lem-2} for an illustration.) 
Next, let us check the tri-similarity inequality in dilations $uY_2'$ of $Y_2'$ ($0<u\leq1$).
If a triple $(uy_1, uy_2, uy_3)$ in $uY_2'$ does not contain $uy'$, the tri-similarity inequality holds because $u\leq1$ and $Y_2$ is a similarity embedding by assumption. 
Otherwise, for a triple $(uy_1, uy_2, uy')$ we should find the range of $u$ such that both values
\begin{align*}
u^2 \| y_1 - y' \|^2 + u^2 \| y_2 -y' \|^2 - u^2 \| y_1 -y_2 \| ^2 - \frac{1}{2} u^4 \| y_1 -y' \|^2 \| y_2 -y' \|^2 \\
u^2 \| y_1 -y' \|^2 + u^2 \| y_1 - y_2 \| ^2 - u^2 \| y_2 -y' \|^2 -\frac{1}{2} u^4 \| y_1 -y'\|^2 \| y_1 -y_2 \|^2 
\end{align*}
become positive.
The upper one satisfies
\begin{align*}
&u^2 \cdot (1+c_3^2 ) \rho_{Y_2}^2 + u^2 \cdot (1+c_3^2 ) \rho_{Y_2}^2 - u^2 \| y_1 -y_2 \|^2 - \frac{1}{2} u^4 (1+c_3^2 )^2\rho_{Y_2}^4  \\
&= u^2 \left[ (2+2c_3^2 )\rho_{Y_2}^2 - \| y_1 -y_2 \|^2  - \frac{1}{2} u^2 (1+c_3^2 )^2 \rho_{Y_2}^4 \right] \\
&\geq u^2 \left[ (2+2c_3^2-c_2^2) \rho_{Y_2}^2 - \frac{1}{2} u^2 (1+c_1^2 )^2 \rho_{Y_2}^4 \right] \\
& = \frac{1}{2} u^2 \rho_{Y_2}^4  (1+c_3^2)^2 \left[ \frac{(4+4c_3^2-2c_2^2)}{\rho_{Y_2}^2(1+c_3^2)^2} - u^2\right]
\end{align*}
and is positive if $u < \frac{\sqrt{4+4c_3^2-2c_2^2} }{(1+c_3^2) c_1 }$. 
And the lower one satisfies
\begin{align*}
&u^2 \cdot (1+c_3^2)\rho_{Y_2}^2 + u^2 \| y_1 -y_2 \|^2 - u^2 \cdot (1+c_3^2)\rho_{Y_2}^2  - \frac{1}{2} u^4 \cdot (1+c_3^2)\rho_{Y_2}^2  \cdot \|y_1 -y_2 \|^2 \\
&= u^2 \| y_1 -y_2 \|^2  \left( 1 - \frac{1+c_3^2 }{2}  u^2 \rho_{Y_2}^2 \right)
\end{align*}
and is positive if $u < \frac{\sqrt{2}}{c_1\sqrt{1+c_3^2}}$.
\end{proof}

For two finite monotone decreasing sequences $r=\langle r_1,r_2,\cdots, r_m \rangle$ and $r'=\langle r_1',r_2',\cdots,r_{m}'\rangle $ of natural numbers, we will write $r \preceq r'$ if $r_i \leq r_i'$ for each $i=1,2,\cdots,m$.

\begin{lem} \label{lem:type-ext}
Let $(X_n)$ be a sequence of FPDMSs clustered in type $r=\langle r_1,r_2,\cdots,r_m \rangle$ such that $\lim_{n\rightarrow \infty}X_n = X$ in Gromov-Hausdorff distance but $\lim_{n \rightarrow \infty}|X_n| > |X|$. 
If we have either
\begin{enumerate}
\item $r'= \langle r_1',r_2',\cdots,r_{m}'\rangle \succeq r $, or
\item $r' = \langle r_1,r_2, \cdots,r_m,0,\cdots,0\rangle$, i.e. $r'$ is obtained by appending finitely many zeros to $r$,
\end{enumerate}
then there exists another sequence $(X_n')$ of FPDMSs clustered in type $r'$ such that $\lim_{n\rightarrow \infty}X_n' = X$ in Gromov-Hausdorff distance but $\lim_{n \rightarrow \infty}|X_n'| > |X|$.
\end{lem}

\begin{proof}
Using Corollary \ref{cor:X-vs-Y}, let $Y$ and $Y_n$ be similarity embeddings of $X$ and $X_n$, for each $n=1,2,\cdots$, respectively, such that $\lim_{n \rightarrow \infty} Y_n = Y$ in Hausdorff distance in $\bR^d$.
\begin{enumerate}
\item By applying Lemma \ref{lem:add-1-pt} iteratively, we can let $Y_n'$ another similarity embedding clustered in type $r'$ and satisfying $d_H(Y_n, Y_n') < \frac{1}{n}$ and $|\rho_{Y_n} - \rho_{Y_n'}| < \frac{1}{n}$. 
Then $(Y_n')$ converges to $Y$ in Hausdorff distance and $\lim_{n \rightarrow 0}(\rho_{Y}-\rho_{Y'}) = 0$.
Then the sequence $(X_n')$ recovered from $(Y_n')$ converges to $X$ by Corollary \ref{cor:X-vs-Y} and satisfy $\lim_{n \rightarrow \infty} (|X_n| - |X_n'|) = 0$.

\item For each $n=1,2,\cdots$, we can choose $Y_n^\ast \subset Y_n$ such that $\# Y_n^\ast = \#Y$ and $\lim_{n \rightarrow \infty} Y_n^\ast = Y$ in Hausdorff distance. 
Then we have $\lim_{n \rightarrow \infty}\rho_{Y_n^\ast} = \rho_Y$. 
By applying Lemma \ref{lem:push-1-pt} for each $n$ to $Y_n^\ast$ and $Y_n$, we obtain $\delta_0,\delta_1 > 0$, ${Y_n^{\ast}}'$ and $Y_n'$ such that $Y' := \lim_{n \rightarrow \infty}{Y_n^\ast}'$ converges in Hausdorff distance, $\sqrt{\rho_{Y_n'}^2 - \rho_{{Y_n^\ast}'}^2} > \delta_0 > 0$, and $uY_n'$ is a similarity embedding for all $0< u < \delta_1$. 
Thus, we obtain a sequence $(uY_n')$ of similarity embeddings clustered in type $\langle r_1,r_2,\cdots,r_m,0\rangle$ and converging to $uY'$. 
By using Corollary \ref{cor:X-vs-Y} after iterating the above process finitely many times, we can construct the desired sequence of FPDMSs.
\end{enumerate}
\end{proof}


\begin{proof}[Proof of Theorem \ref{thm:main}, ``only if'' part] Suitable counterexamples can be constructed, by first applying Lemma \ref{lem:type-ext} to ``amplify'' one of the similarity embeddings introduced in Section \ref{subsec:counterexamples} and then recovering FPDMSs by Corollary \ref{cor:X-vs-Y}. 
Let us refer to subsets mentioned in Examples \ref{ex:counter-(4)}, \ref{ex:counter-(3)}, \ref{ex:counter-(2,1)}, and \ref{ex:counter-(1,1,1)} as $Y_{\langle 4 \rangle}$, $Y_{\langle 3,0\rangle}$, $Y_{\langle 2,1\rangle}$, and $Y_{\langle 1,1,1 \rangle}$, respectively.

If $r$ is a finite monotone decreasing sequence of natural numbers such that $\|r\|_1 \geq 3$, then $r$ satisfies either $r \succeq \langle 3,0,\cdots,0 \rangle$ or $r \succeq \langle 2,1,0,\cdots,0 \rangle$ or $r \succeq \langle 1,1,1,0,\cdots,0 \rangle$. 
By assumption, the first case $r \succeq \langle 3,0,\cdots,0 \rangle$ implies either $\#X = 1, r \succeq \langle 4,0,\cdots,0 \rangle$ or $\#X = 2, r \succeq \langle 3,0,\cdots,0 \rangle$.
In each of these four sub-cases, we can construct counterexamples from $Y_{\langle 4 \rangle}$, $Y_{\langle 3,0\rangle}$, $Y_{\langle 2,1\rangle}$, and $Y_{\langle 1,1,1 \rangle}$, respectively.
\end{proof}

\bibliographystyle{abbrv}
\bibliography{references}

\begin{thebibliography}{10}

\bibitem{AsaoGomi2025}
Y.~Asao and K.~Gomi.
\newblock Geometric interpretation of magnitude, 2025.

\bibitem{BarceloCarbery2018}
J.~A. Barceló and A.~Carbery.
\newblock On the magnitudes of compact sets in euclidean spaces.
\newblock {\em American Journal of Mathematics}, 140:449--494, 4 2018.

\bibitem{Bunch+2020}
E.~Bunch, D.~Dickinson, J.~Kline, and G.~Fung.
\newblock Weighting vectors for machine learning: numerical harmonic analysis applied to boundary detection.
\newblock In {\em TDA \& Beyond}, 2020.

\bibitem{Devriendt2025}
K.~Devriendt.
\newblock The geometry of magnitude for finite metric spaces, 2025.

\bibitem{GimperleinGoffeng2021}
H.~Gimperlein and M.~Goffeng.
\newblock On the magnitude function of domains in euclidean space.
\newblock {\em American Journal of Mathematics}, 143:939--967, 2021.

\bibitem{Gimperlein+2022}
H.~Gimperlein, M.~Goffeng, and N.~Louca.
\newblock The magnitude and spectral geometry, 2024.

\bibitem{KaltonOstrovskii1999}
N.~J. Kalton and M.~I. Ostrovskii.
\newblock Distances between banach spaces.
\newblock {\em Forum Mathematicum}, 11, 1 1999.

\bibitem{Katsumasa+2025}
H.~Katsumasa, E.~Roff, and M.~Yoshinaga.
\newblock Is magnitude 'generically continuous' for finite metric spaces?, 2025.

\bibitem{Leinster2013}
T.~Leinster.
\newblock The magnitude of metric spaces.
\newblock {\em Doc. Math.}, 18:857--905, 2013.

\bibitem{LeinsterMeckes2016}
T.~Leinster and M.~W. Meckes.
\newblock Maximizing diversity in biology and beyond.
\newblock {\em Entropy}, 18, 2016.

\bibitem{LeinsterWillerton2013}
T.~Leinster and S.~Willerton.
\newblock On the asymptotic magnitude of subsets of euclidean space.
\newblock {\em Geometriae Dedicata}, 164:287--310, 2013.

\bibitem{Limbeck+2024}
K.~Limbeck, R.~Andreeva, R.~Sarkar, and B.~Rieck.
\newblock Metric space magnitude for evaluating the diversity of latent representations.
\newblock In A.~Globerson, L.~Mackey, D.~Belgrave, A.~Fan, U.~Paquet, J.~Tomczak, and C.~Zhang, editors, {\em Advances in Neural Information Processing Systems}, volume~37, pages 123911--123953. Curran Associates, Inc., 2024.

\bibitem{Meckes2013}
M.~W. Meckes.
\newblock Positive definite metric spaces.
\newblock {\em Positivity}, 17:733--757, 2013.

\bibitem{Meckes2015}
M.~W. Meckes.
\newblock Magnitude, diversity, capacities, and dimensions of metric spaces.
\newblock {\em Potential Analysis}, 42:549--572, 2015.

\bibitem{Memoli2008}
F.~Memoli.
\newblock Gromov-hausdorff distances in euclidean spaces.
\newblock In {\em 2008 IEEE Computer Society Conference on Computer Vision and Pattern Recognition Workshops}, pages 1--8, 2008.

\bibitem{Memoli2013}
F.~Mémoli.
\newblock The gromov-hausdorff distance: a brief tutorial on some of its quantitative aspects.
\newblock {\em Actes des rencontres du CIRM}, 3:89--96, 2013.

\end{thebibliography}

\appendix

\section{Supplement for Section \ref{subsec:counterexamples} } \label{sec:computation}

To assure that each Euclidean subset $Y_t$ of examples in Section \ref{subsec:counterexamples} is indeed a similarity embedding for all sufficiently small $t>0$, we verify the tri-similarity inequality \eqref{eq:strongly-acute} for every triple of the elements of $Y$. 
In the following computations, we check all the triangles whose vertices belong to $Y_t$, all three (instead of six, because of the symmetricity in inequality \eqref{eq:strongly-acute}) triples from each triangle, except those which follow from symmetricity. 

\begin{proof}[Supplement for Example \ref{ex:counter-(4)}]
The squared pairwise distances in $Y_t$ are
\begin{align*}
{\rm AB^2 = BC^2 = CA^2} &=  48 t^2 \\
{\rm DA^2 = DB^2 = DC^2  
= EA^2 = EB^2 = EC^2 }& = 25 t^2 \\
{\rm DE^2} & = 36t^2 - 36s^2 t^4.
\end{align*}
Then the tri-similarity inequality \eqref{eq:strongly-acute} is expressed as follows: 
for isosceles triangle $\rm DAB$ (and similarly for congruent triangles $\rm DBC$, $\rm DCA$, $\rm EAB$, $\rm EBC$ and $\rm ECA$)
\begin{align*} 
{\rm AD^2 + BD^2 - AB^2 - \frac{1}{2} AD^2 \cdot BD^2 }
& = 2t^2  + O(t^4) \\
{\rm AD^2 + AB^2 - BD^2 - \frac{1}{2} AD^2 \cdot AB^2} & = 48t^2  + O(t^4)
\end{align*}
and for isosceles triangle $\rm ADE$ (and similarly for congruent triangles $\rm BDE$ and $\rm CDE$)
\begin{align*}
{\rm AD^2 + AE^2 - DE^2 - \frac{1}{2} AD^2 \cdot AE^2 }
& = 14 t^2 + O(t^4) \\
{\rm AD^2 + DE^2 - AE^2 - \frac{1}{2} AD^2 \cdot DE^2 }
& = 36t^2 + O(t^4).
\end{align*}
We have skipped the regular triangle $\rm ABC$ because the computation is immediate.
These computations show that \eqref{eq:strongly-acute} indeed holds true for all triples, for all sufficiently small positive $t$. 
\end{proof}

\begin{proof}[Supplement for Example \ref{ex:counter-(3)}]
The squared pairwise distances in $Y_t$ are
\begin{align*}
{\rm AE^2=CE^2 }&= 4\rho^2  + (1+4\rho \cos s)t^2 \\
{\rm BE^2 = DE^2} &= 4\rho^2 + (1-4\rho \cos s)t^2 \\
{\rm AB^2=BC^2=CD^2=DA^2} &= \phantom{4\rho^2 + (1+\rho \cos s )}2t^2+2t^4 \\
{\rm AC^2 = BD^2}& = \phantom{4\rho^2 + (1+\rho \cos s )} 4t^2 -4t^4.
\end{align*}
Then the tri-similarity inequality \eqref{eq:strongly-acute} is expressed as follows:
for isosceles triangle $\rm ABC$ (and similarly for congruent triangles $\rm BCD$, $\rm CDA$ and $\rm DAB$) 
\begin{align*}
{\rm AB^2 + BC^2 - CA^2 - \frac{1}{2} AB^2\cdot BC^2 }
& = 6t^4 + O(t^6), \\
{\rm AB^2 + AC^2 - BC^2 - \frac{1}{2} AB^2 \cdot AC^2 } &= 4t^2 + O(t^4),
\end{align*}
for isosceles triangle $\rm EAC$
\begin{align*}
{\rm AE^2 +CE^2 - AC^2 -\frac{1}{2} AE^2 \cdot CE^2 }& = (8\rho^2 - 8\rho^4 )  + O(t^2) \\
{\rm AE^2 +AC^2 - CE^2  - \frac{1}{2} AE^2 \cdot AC^2 }&= (4-8\rho^2)t^2 + O(t^4),
\end{align*}
for isosceles triangle $\rm EBD$,
\begin{align*}
{\rm BE^2 +  DE^2 - BD^2 - \frac{1}{2} BE^2 \cdot DE^2 }&= (4\rho^2 - 8\rho^4)  + O(t^2) \\
{\rm BE^2 + BD^2 - DE^2 - \frac{1}{2} BE^2 \cdot BD^2 }& = (4 - 8\rho^2 )t^2  + O(t^4),
\end{align*}
and for triangle $\rm EAB$ (and similarly for congruent triangles $\rm EBC$, $\rm ECD$ and $\rm EDA$)
\begin{align*}
{\rm AE^2 + BE^2 - AB^2 - \frac{1}{2} AE^2 \cdot BE^2 }&= (8\rho^2 - 8\rho^4) + O(t^2 ) \\
{\rm AE^2 + AB^2 - BE^2 - \frac{1}{2} AE^2 \cdot AB^2 }&= ( 2 +8\rho \cos s - 4\rho^2  ) t^2 + O(t^4) \\
{\rm BE^2 + AB^2 - AE^2 - \frac{1}{2} BE^2 \cdot AB^2 }&= (2 - 8\rho \cos s - 4\rho^2 ) t^2 + O(t^4).
\end{align*}
These computations show that \eqref{eq:strongly-acute} indeed holds true for all triples, for all sufficiently small positive $t$. 
\end{proof}

\begin{proof}[Supplement for Example \ref{ex:counter-(2,1)}]
The squared pairwise distances in $Y_t$ are
\begin{align*}
{\rm AD^2 }&= 4\rho^2 , \\
{\rm BD^2 = CD^2 =AE^2 }& = 4\rho^2 + \phantom{{}-2\sqrt{3}} (4-4\rho)t^2 + (1+s^2)t^4, \\
{\rm BE^2 = CE^2 }& = 4\rho^2 + (8-2\sqrt{3}-8\rho)t^2 + 4t^4 \\
{\rm AB^2 = AC^2 = DE^2 }&= \phantom{4\rho^2 + ({}-2\sqrt{3}-8\rho)} 4t^2 + (1+s^2)t^4, \\
{\rm BC^2 }&=\phantom{4\rho^2 + ({}-2\sqrt{3}-8\rho)} 4t^2.
\end{align*}
Then the tri-similarity inequality \eqref{eq:strongly-acute} is expressed as follows:
for isosceles triangle $\rm ABC$
\begin{align*}
{\rm AB^2 + AC^2 - BC^2 - \frac{1}{2} AB^2 \cdot AC^2 }& = 4t^2 + O(t^4) \\
{\rm AB^2 + BC^2 - AC^2 - \frac{1}{2} AB^2 \cdot BC^2 }& = 4t^2 + O(t^4),
\end{align*}
for triangle $\rm ABD$ (and similarly in the congruent triangle $\rm ACD$ also)
\begin{align*}
{\rm AD^2 +BD^2 - AB^2  - \frac{1}{2} AD^2 \cdot BD^2 }
& = 8\rho^2 + O(t^2 ) \\
{\rm AD^2 + AB^2 - BD^2 - \frac{1}{2} AD^2 \cdot AB^2 }
& = (4\rho-8\rho^2 )t^2 + O(t^4) \\
{\rm AB^2 + BD^2 - AD^2 - \frac{1}{2} AB^2 \cdot BD^2 }
& = ( 8 -4\rho-8\rho^2) t^2 + O(t^4),
\end{align*}
for triangle $\rm ABE$ (and similarly in the congruent one $\rm ACE$ also)
\begin{align*}
{\rm AE^2 + BE^2 - AB^2 - \frac{1}{2} AE^2 \cdot BE^2 }
&= 8\rho^2 + O(t^2 ) \\
{\rm AB^2 +BE^2 - AE^2 - \frac{1}{2} AB^2 \cdot BE^2 }
& =( 8-2\sqrt{3} - 4\rho- 8\rho^2 )t^2 + O(t^4) \\
{\rm AE^2 + AB^2 - BE^2 - \frac{1}{2} AE^2 \cdot AB^2 }
& = ( 2\sqrt{3} + 4\rho - 8\rho^2 ) t^2 + O(t^4),
\end{align*}
for isosceles triangle $\rm BCD$
\begin{align*}
{\rm BD^2 +CD^2 - BC^2 - \frac{1}{2} BD^2 \cdot CD^2 }
& = 8\rho^2 + O(t^2 ) \\
{\rm BC^2 + CD^2 - BD^2 - \frac{1}{2} BC^2 \cdot CD^2 }
& = (4 - 8\rho^2) t^2 + O(t^2),
\end{align*}
for isosceles triangle $\rm BCE$
\begin{align*}
{\rm BE^2 + CE^2 - BC^2  - \frac{1}{2} BE^2 \cdot CE^2 }
&= 8\rho^2 + O(t^2 ) , \\
{\rm BC^2 + CE^2 - BE^2 - \frac{1}{2} BC^2 \cdot CE^2 }
& = (4 - 8\rho^2) t^2 + O(t^4 ),
\end{align*}
for triangle $\rm ADE$
\begin{align*}
{\rm AD^2 + AE^2 - AE^2 - \frac{1}{2} AD^2 \cdot AE^2 }
& = 8\rho^2 + O(t^2), \\
{\rm AD^2 + DE^2 - AE^2 - \frac{1}{2} AD^2 \cdot DE^2 }
& = (4\rho - 8\rho^2)t^2 + O(t^4), \\
{\rm AE^2 + DE^2 - AD^2 - \frac{1}{2} AE^2 \cdot DE^2 }
& = ( 8 -4\rho- 8\rho^2)t^2 + O(t^4),
\end{align*}
and for triangle $\rm BDE$ (and similarly for congruent triangle $\rm CDE$)
\begin{align*}
{\rm BD^2 + BE^2 - DE^2 - \frac{1}{2} BD^2 \cdot BE^2 }
& = 8\rho^2 + O(t^2 ) \\
{\rm BD^2 + DE^2 - BE^2 - \frac{1}{2} BD^2 \cdot DE^2 }
& = (2\sqrt{3} + 4\rho - 8\rho^2) t^2 + O(t^4) \\
{\rm BE^2 + DE^2 - BD^2 - \frac{1}{2} BE^2 \cdot DE^2 }
& = ( 8-2\sqrt{3} -4\rho- 8\rho^2) t^2 + O(t^4).
\end{align*}
These computations show that \eqref{eq:strongly-acute} indeed holds true for all triples, for all sufficiently small positive $t$. 
\end{proof}

\begin{proof}[Supplement for Example \ref{ex:counter-(1,1,1)}]
The squared pairwise distances in $Y_t$ are
\begin{align*}
{\rm AC^2 = CE^2 = EA^2 }
& = 3\rho^2, \\
{\rm AD^2 = AF^2 } 
& = 3\rho^2 + \phantom{\big[{}-2\sqrt{2}+1\big]}(2-\sqrt{3}\rho)t^2 + t^4 \\
{\rm BD^2 =BF^2 }
& = 3\rho^2 + \big[ 4-2\sqrt{2} - (3+\sqrt{3})\rho\big]t^2 + (2+s^2 )t^4 \\
{\rm CB^2  =EB^2 }
& = 3\rho^2 + \phantom{\big[{}-2\sqrt{2}+\sqrt{2}\big]}(2-3\rho)t^2 + (1+s^2 )t^4 \\
{\rm CF^2  = ED^2 }
& = 3\rho^2 + \phantom{\big[{}-2+\sqrt{2}\big]}(2 -2\sqrt{3}\rho)t^2 + t^4 \\
{\rm DF^2 }
&= 3\rho^2 + \phantom{\big[{}-2+\sqrt{2}\big]}(4-4\sqrt{3}\rho ) t^2 + 4t^4, \\
{\rm AB^2 }
&= \phantom{3\rho^2 + \big[4-\sqrt{2}-(3+\sqrt{3})\rho\big]} 2t^2 + (1+s^2 ) t^4 , \\
{\rm CD^2 = EF^2 }
&=\phantom{3\rho^2 + \big[4-\sqrt{2}-(3+\sqrt{3})\rho\big]}2t^2 +t^4 , 
\end{align*}
Then the tri-similarity inequality \eqref{eq:strongly-acute} is expressed as follows: 
for triangle $\rm BDF$
\begin{align*}
{\rm BD^2 +DF^2 - BF^2 - \frac{1}{2} BD^2 \cdot DF^2 }
& =  \left( 6\rho^2 - \frac{9}{2} \rho^4 \right) + O(t^2 ) \\
{\rm DF^2 +BF^2 - BD^2 - \frac{1}{2} DF^2 \cdot BF^2 }
& =  \left( 6\rho^2 - \frac{9}{2} \rho^4 \right) + O(t^2 ) \\
{\rm BF^2 +BD^2 - DF^2 - \frac{1}{2} BF^2 \cdot BD^2 }
& =  \left( 6\rho^2 - \frac{9}{2} \rho^4 \right) + O(t^2 ),
\end{align*}
in quadrilateral $\rm ABCD$ (and similarly for congruent quadrilateral $\rm ABEF$), for triangle $\rm ABC$
\begin{align*}
{\rm AC^2 + BC^2 - AB^2 - \frac{1}{2} AC^2 \cdot BC^2 }
&= \left(6\rho^2 - \frac{9}{2}\rho^4 \right) + O(t^2) \\
{\rm AB^2 + BC^2 - AC^2 - \frac{1}{2} AB^2 \cdot BC^2 }
& = (4-3\rho-3\rho^2)t^2 + O(t^4) ,\\
{\rm AC^2 + AB^2 - BC^2 - \frac{1}{2} AC^2 \cdot AB^2 }
& = (3\rho - 3\rho^2 ) t^2  + O(t^4),
\end{align*}
for triangle $\rm ACD$
\begin{align*}
{\rm AC^2 + AD^2 - CD^2 - \frac{1}{2} AC^2 \cdot AD^2 }
& = \left(6\rho^2 - \frac{9}{2} \rho^4 \right) + O(t^2 ) \\
{\rm AC^2 + CD^2 - AD^2 - \frac{1}{2} AC^2 \cdot CD^2 }
& = (\sqrt{3}\rho - 3\rho^2 )t^2 + O(t^4) \\
{\rm AD^2 + CD^2 - AC^2 - \frac{1}{2} AD^2 \cdot CD^2 }
& = (4-\sqrt{3} \rho - 3\rho^2 ) t^2 + O(t^4),
\end{align*}
for triangle $\rm BCD$
\begin{align*}
{\rm BC^2 + BD^2 - CD^2 - \frac{1}{2} BC^2 \cdot BD^2 }
& = \left(6\rho^2 - \frac{9}{2} \rho^4 \right) + O(t^2) \\
{\rm BC^2 + CD^2 - BD^2 - \frac{1}{2} BC^2 \cdot CD^2 }
& = (2\sqrt{2} +\sqrt{3} \rho - 3\rho^2 )t^2 + O(t^4) \\
{\rm BD^2 + CD^2 - BC^2 - \frac{1}{2} BD^2 \cdot CD^2 }
& = (4-2\sqrt{2}  -\sqrt{3}\rho - 3\rho^2 )t^2 + O(t^4),
\end{align*}
for triangle $\rm ABD$
\begin{align*}
{\rm AD^2 + BD^2 - AB^2 - \frac{1}{2} AD^2 \cdot BD^2 }
& = \left(6\rho^2 - \frac{9}{2} \rho^4 \right) + O(t^2 ) \\
{\rm AD^2 + AB^2 - BD^2 - \frac{1}{2} AD^2 \cdot AB^2 }
& = (2\sqrt{2} +3\rho - 3\rho^2 )t^2 + O(t^4) \\
{\rm BD^2 + AB^2 - AD^2 - \frac{1}{2} BD^2 \cdot AB^2 }
& = (4-2\sqrt{2} -3\rho -3\rho^2 )t^2 + O(t^4 ),
\end{align*}
in quadrilateral $\rm CDEF$, for triangle $\rm CDE$ (and similarly for congruent triangle $\rm CEF$)
\begin{align*}
{\rm CE^2 + DE^2 - CD^2 - \frac{1}{2} CE^2 \cdot DE^2 }
& = \left(6\rho^2 - \frac{9}{2} \rho^4\right) + O(t^2) \\
{\rm CD^2 + CE^2 - DE^2 - \frac{1}{2} CD^2 \cdot CE^2 }
& = (2\sqrt{3}\rho - 3\rho^2 ) t^2 + O(t^4 ) \\
{\rm CD^2 + DE^2 - CE^2 - \frac{1}{2} CD^2 \cdot DE^2 }
& = (4-2\sqrt{3}\rho - 3\rho^2 ) t^2 + O(t^4 ),
\end{align*}
and for triangle $\rm CDF$ (and similarly for congruent triangle $\rm DEF$)
\begin{align*}
{\rm CF^2 + DF^2 - CD^2 - \frac{1}{2} CF^2 \cdot DF^2 }
& = \left(6\rho^2 - \frac{9}{2}\rho^4\right) + O(t^2) \\
{\rm CF^2 + CD^2 - DF^2 - \frac{1}{2} CF^2 \cdot CD^2 }
& = (2\sqrt{3}\rho - 3\rho^2 )t^2 + O(t^4) \\
{\rm DF^2 + CD^2 - CF^2 - \frac{1}{2} DF^2 \cdot CF^2 }
& = (4 -2\sqrt{3}\rho - 3\rho^2 ) t^2 + O(t^4).
\end{align*}
We have skipped the regular triangle $\rm ACE$ because the computation is immediate. 
These computations show that \eqref{eq:strongly-acute} indeed holds true for all triples, for all sufficiently small positive $t$. 
\end{proof}

\end{document}